\newtheorem{theorem}{Theorem}[section]
\newtheorem{proposition}[theorem]{Proposition}
\newtheorem{definition}[theorem]{Definition}
\newtheorem{lemma}[theorem]{Lemma}
\newtheorem{corollary}[theorem]{Corollary}
\theoremstyle{definition}
\newtheorem*{acknowledgement}{Acknowledgement}
\newcommand{\im}{\mathop{\mathrm{im}}}
\newcommand{\coker}{\mathop{\mathrm{coker}}}
\newcommand{\id}{\mathrm{id}}
\newcommand{\Sph}{\mathcal{S}}
\newcommand{\Aut}{\mathrm{Aut}}
\newcommand{\D}{\mathrm{D}}
\renewcommand{\O}{\mathrm{O}}
\newcommand{\Z}{\mathbb{Z}}
\newcommand{\Stab}{\mathrm{Stab}}
\newcommand{\Hom}{\mathrm{Hom}}
\renewcommand{\Dc}{\mathcal{D}}
\renewcommand{\P}{\mathcal{P}}
\newcommand{\NS}{\mathrm{NS}}
\newcommand{\Ox}{\mathcal{O}_X}
\begin{document}

\begin{center}
{\large \bfseries \MakeUppercase{Central derived autoequivalences of K3 surfaces}} \\[0.5cm]
\MakeUppercase{A. Savelyeva}
\end{center}

\begin{abstract}
 We apply the theory of Bridgeland's stability conditions to describe the center of the group $\Aut(\D^b(X))$ of bounded  derived autoequivalences of a complex projective K3 surface.
\end{abstract}

\section{Introduction}

Let $X$ be a complex projective K3 surface and consider the group of autoequivalences $\Aut(\D^b(X))$ of its bounded derived category of coherent sheaves. In this work we describe the center of $\Aut(\D^b(X))$. More precisely, we prove the following theorem, which can be seen as an analogue of the classical result that the center of a mapping class group of a surface is trivial for genus greater than $2$.

\begin{theorem} \label{main_intro}
The center $Z$ of $\Aut(\D^b(X))$ is isomorphic to $\Aut^t(X) \times \Z \cdot [1]$.
\end{theorem}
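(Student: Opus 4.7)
The plan is to study the natural Mukai--Orlov representation $\rho\colon\Aut(\D^b(X))\to\O^+(\widetilde H(X,\Z))$ on the Mukai lattice and decompose the computation of $Z$ into analyzing $\ker\rho\cap Z$ on the one hand and the centre of the image on the other. The inclusion $\Aut^t(X)\times\Z\cdot[1]\subseteq Z$ is the straightforward half: the shift $[1]$ is the translation functor of the triangulated structure, so every triangulated equivalence commutes with it; and for $f\in\Aut^t(X)$, the functor $f_*$ acts trivially on cohomology and on $\mathrm{Pic}(X)$, and a direct Fourier--Mukai kernel check (or appeal to the classification of cohomologically trivial autoequivalences) shows it commutes with every autoequivalence, in particular with $\otimes L$ and with spherical twists.

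For the reverse inclusion, take $\Phi\in Z$ and set $\bar\Phi:=\rho(\Phi)\in Z(G)$, where $G:=\rho(\Aut(\D^b(X)))$. The heart of the argument is to show $Z(G)=\{\pm\id\}$ inside $\O^+(\widetilde H(X,\Z))$. The group $G$ is known to contain the Seidel--Thomas spherical reflections $s_\delta$ for each $(-2)$-class $\delta\in\widetilde H^{1,1}(X,\Z)$, the B-field translations $v\mapsto v\cdot e^L$ for $L\in\mathrm{Pic}(X)$ coming from $\otimes L$, and the pullback isometries induced by $\Aut(X)$. A lattice-theoretic computation, exploiting the abundance of $(-2)$-classes in the algebraic Mukai lattice and the nondegeneracy of the Mukai pairing, forces any element of $\O^+(\widetilde H(X,\Z))$ commuting with this whole collection to be $\pm\id$. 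Since $\rho([1])=-\id$ and $\rho([2])=\id$, after possibly replacing $\Phi$ by $\Phi\circ[1]$ we may assume $\bar\Phi=\id$, i.e.\ $\Phi\in\ker\rho$.

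It then remains to identify $\ker\rho$. For this I would invoke the K3 classification of autoequivalences acting trivially on the Mukai lattice (as in Ploog's thesis / Huybrechts): such an autoequivalence is necessarily of the form $f_*\circ[2n]$ for some $f\in\Aut^t(X)$ and $n\in\Z$. Combined with the previous step, this yields $Z=\Aut^t(X)\times\Z\cdot[1]$. The main obstacle I anticipate is the centraliser computation for $G$: coordinating spherical reflections, B-field translations, and automorphism pullbacks so as to pin down $Z(G)$ exactly requires simultaneous use of Bridgeland's stability-manifold description of $\Aut(\D^b(X))$ (for the existence and range of spherical classes realized in $G$) and Mukai's Hodge-theoretic analysis of isometries of $\widetilde H(X,\Z)$.
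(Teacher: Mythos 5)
There is a genuine gap, and it lies precisely where you declare the argument ``straightforward''. The inclusion $\Aut^t(X) \times \Z\cdot[1] \subseteq Z$ is not the easy half; it is the entire content of the theorem. Barbacovi and Kikuta already proved $Z \subseteq \Aut^t(X)\times\Z\cdot[1]$ unconditionally, and could only deduce the inclusion you dismiss from Bridgeland's conjecture. Your proposed justification --- ``a direct Fourier--Mukai kernel check (or appeal to the classification of cohomologically trivial autoequivalences)'' --- does not exist. For an arbitrary $\Phi\in\Aut(\D^b(X))$, whose kernel is some unknown object on $X\times X$, the commutator $\Phi\circ f_*\circ\Phi^{-1}\circ f_*^{-1}$ can only be seen a priori to lie in $\Aut_0(\D^b(X))$ (note also that $f\in\Aut^t(X)$ acts trivially on $\NS(X)$ but not in general on all of $H^*(X,\Z)$; triviality of the commutator on the transcendental lattice needs the separate observation that Hodge isometries of $T(X)$ form an abelian group). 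There is no classification of $\Aut_0(\D^b(X))$: its description is exactly Bridgeland's conjecture, and the group contains, e.g., $T_E^2$ for every spherical $E$. The paper closes this gap by showing that $f\in\Aut^t(X)$ preserves every spherical object (via the new technical result that an autoequivalence preserving all $\sigma$-stable spherical objects preserves all rigid objects, combined with Huybrechts' characterisation of stability conditions by their stable spherical objects), whence the commutator fixes the distinguished component $\Sigma$ of $\Stab(X)$ pointwise and is the identity by Bridgeland's theorem. Nothing in your sketch substitutes for this.

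The same false classification undermines your treatment of the other inclusion: after reducing to $\Phi\in\ker\rho$ you invoke ``such an autoequivalence is necessarily of the form $f_*\circ[2n]$'', which is false for the reason just given ($T_E^2\in\ker\rho$ but is not of that form). You would need to use centrality of $\Phi$ again at this point, which your outline does not do. The paper's route is more elementary and avoids the cohomological representation entirely: a central $\Phi$ satisfies $T_{\Phi(\Ox)} = \Phi\circ T_{\Ox}\circ\Phi^{-1} = T_{\Ox}$, hence $\Phi(\Ox)\cong\Ox[n]$; then $\Phi\circ[-n]$ commutes with $\otimes L$ for every line bundle, so it preserves the ample sequence $\{L^i\}$ and Bondal--Orlov identifies it with an automorphism, necessarily transcendental. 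Your centraliser computation $Z(G)=\{\pm\id\}$ in $\O^+(\widetilde H(X,\Z))$ is plausible but is left as an unproved assertion, and even granting it, the argument does not close.
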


Here $\Z \cdot [1]$ is the subgroup of $\Aut(\D^b(X))$ generated by the shift functor and $\Aut^t(X)$ is the group of transcendental automorphisms of $X$, which by definition consists of all $f \in \Aut(X)$ such that $f^* \in \O(H^2(X, \Z))$ acts trivially on the algebraic part $\mathrm{NS}(X)$ of the cohomology lattice. It is known to be a finite cyclic group.

The question was previously addressed in \cite{center}, where Barbacovi and Kikuta proved the inclusion $Z \subset \Aut^t(X) \times \Z \cdot [1]$ and deduced the other inclusion from Bridgeland's conjecture (see \cite[Conj.~1.2]{Bridgeland_k3}). We manage to avoid Bridgeland's conjecture by using Huybrechts' approach to stability conditions via spherical objects developed in \cite{spherical}. Our resul can be seen as further evidence for Bridgeland's conjecture, which so far has only been proved for Picard rank $\leqslant 1$ (see \cite{BB}, \cite{few}).\\[-5pt]

The following is the principal technical result of this work.

\begin{proposition} \label{technical_intro}
Suppose an autoequivalence $f \in \Aut(\Dc)$ of a triangulated K3 category $\Dc$ preserves all stable spherical objects for a given stability condition $\sigma \in \Stab(\Dc)$. Then $f$ preserves all rigid objects in $\Dc$.
\end{proposition}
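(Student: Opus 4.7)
The plan is to reduce the claim by successive steps: from general rigid objects down to stable spherical ones (where the hypothesis applies) via Harder--Narasimhan and Jordan--Hölder filtrations, and then to lift back by a uniqueness-of-rigid-extensions argument.

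First, I would exploit the structural fact for K3 categories that every indecomposable rigid object is spherical and that every rigid object in $\Dc$ decomposes as a finite direct sum of such. Since $f$ is an equivalence it preserves direct sum decompositions up to isomorphism, so the statement reduces to showing $f(E) \cong E$ for an arbitrary spherical $E \in \Dc$.

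Second, I would fix a spherical $E$ and consider its $\sigma$-Harder--Narasimhan filtration with semistable factors $A_1, \ldots, A_n$ of strictly decreasing phases $\phi_1 > \cdots > \phi_n$, refined by Jordan--Hölder inside each $\P(\phi_i)$ to stable constituents $S_{i,k}$. The phase ordering and Serre duality on the K3 category constrain $\Hom(A_i, A_j[\ell])$ to vanish in specific degrees for $i\neq j$. A Mukai-pairing computation using $\chi(E,E)=2$ together with the rigidity of $E$ then forces each HN factor $A_i$ to be rigid and each stable JH factor $S_{i,k}$ to be spherical; this is where the Calabi--Yau-$2$ structure is essential. The hypothesis now applies to give $f(S_{i,k}) \cong S_{i,k}$ for every pair $(i,k)$.

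Third, to conclude $f(E) \cong E$, I would argue by uniqueness of rigid objects with prescribed Jordan--Hölder data. Since $f$ preserves each $S_{i,k}$, both the Mukai vector and the isomorphism classes of the $\sigma$-HN/JH constituents of $f(E)$ coincide with those of $E$. Inside each finite-length abelian stratum $\P(\phi_i)$, a rigid object is determined up to isomorphism by its multiplicities of simples, yielding $f(A_i)\cong A_i$. A parallel rigidity-controls-extensions argument across HN strata, using the vanishing of negative-degree Ext groups imposed by strict phase ordering, then assembles these back to $f(E)\cong E$.

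The main obstacle is this final uniqueness step. Within a single abelian stratum $\P(\phi_i)$ it is close to the classical fact that rigid modules over a finite-dimensional algebra are pinned down by their dimension vector; gluing across HN strata requires tracking Ext-spaces between $\sigma$-semistable subcategories of different phases and using the rigidity of $E$ to collapse the remaining ambiguity in the extension classes. Carrying this out cleanly is, I expect, where Huybrechts' framework of stability conditions via spherical objects enters in an essential way.
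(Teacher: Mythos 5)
Your overall skeleton agrees with the paper's: filter a rigid object by a stability condition, use the Ext$^1$-inequality of Huybrechts--Macr\`i--Stellari to show the factors are rigid and the stable factors spherical, apply the hypothesis to those factors, and then reassemble. But the reassembly is exactly where your argument stops being a proof. You write that a rigid object is ``determined up to isomorphism by its JH data'' and that a ``parallel rigidity-controls-extensions argument'' will glue across HN strata, and you acknowledge this as the main obstacle --- that obstacle is the actual content of the paper. The paper fills it with Proposition \ref{isomorphic_cones}: if $A$ and $E$ are rigid with $(E,A)^i=0$ for $i\leqslant 0$, then any two non-zero morphisms $A[-1]\to E$ with rigid cones have isomorphic cones. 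This is proved by a dimension count (Lemma \ref{dimension_inequality}) showing $\dim(\im\alpha)+\dim(\im\beta)>\dim(A,E)^1$ for the two composition maps, which via a Zariski-openness argument (Lemmas \ref{technical_inequality}, Proposition \ref{open_neighborhood}) makes the locus of morphisms with a fixed rigid cone open, so any two such loci meet. (Your instinct that this resembles the density-of-orbits argument for rigid modules over a finite-dimensional algebra is the right one, but in the triangulated setting it has to be carried out by hand, and it requires the vanishing $(E,A)^{\leqslant 0}=0$.) To make that proposition applicable at each step, the paper does not use the raw HN/JH filtration but the modified filtration of Lemma \ref{good_decomposition}, built from evaluation maps $E\to A^{\oplus n}$ onto the last stable factor, precisely so that $(E_i,A_{i+1})^0=0$ holds; the induction then runs on the length of that filtration, treating the cases $\psi\neq 0$ and $\psi=0$ separately.

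A secondary problem: your opening reduction asserts as a ``structural fact'' that every rigid object of a K3 category is a finite direct sum of spherical objects. This is not a known theorem for a general triangulated K3 category (nor even for $\D^b(X)$ at arbitrary Picard rank), and the paper carefully avoids it --- its filtrations are iterated extensions, not direct sum decompositions, which is why the isomorphic-cones result is needed at all. Fortunately this step is also unnecessary for your own plan, since a spherical object is in particular rigid and your Steps 2--3 apply to any rigid object directly; but as written it is an unjustified claim, and even granting it you would still need $f$ to preserve arbitrary (not necessarily $\sigma$-stable) spherical objects, which is not what the hypothesis gives you.
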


A triangulated category $\Dc$ is called a K3 category if a square of the shift functor is a Serre functor for $\Dc$. In particular if $X$ is a K3 surface, the derived category of coherent sheaves $\D^b(X)$ is a K3 category. An object $E$ of a K3 category $\Dc$ is called spherical if $\dim \Hom(E, E[i]) = 1$ for $i = 0,\,2$ and $\dim \Hom(E, E[i]) = 0$ otherwise. For the information on Bridgeland stability conditions see \cite{Bridgeland_stability}, \cite{Bridgeland_k3} and \cite{huybrechts_stability}.

We deduce from Proposition \ref{technical_intro} that an autoequivalence $\Phi$ acts trivially on the set $\Sph$ of all spherical objects in $\D^b(X)$ if and only if $\Phi \in \Aut^t(X)$.

\begin{corollary}\label{spherical_intro}
Let $\tau \colon \Aut(\D^b(X)) \to \Aut(\Sph)$ be the map induced by the action of $\Aut(\D^b(X))$ on $\Sph$. Then the kernel $\ker \tau \subset \Aut(\D^b(X))$ is equal to $\Aut^t(X)$.
\end{corollary}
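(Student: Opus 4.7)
The plan is to prove the two inclusions of $\ker \tau = \Aut^t(X)$ separately, with the non-trivial direction $\ker \tau \subset \Aut^t(X)$ relying on Proposition \ref{technical_intro}. For the inclusion $\Aut^t(X) \subset \ker \tau$, any $f \in \Aut^t(X)$ acts trivially on $\NS(X)$ by definition, and hence trivially on the algebraic Mukai lattice $\widetilde{\NS}(X) := H^0(X, \Z) \oplus \NS(X) \oplus H^4(X, \Z)$. For every spherical object $E$, the Mukai vector $v(E)$ lies in $\widetilde{\NS}(X)$, so $v(f^* E) = v(E)$. Using that each spherical object is $\sigma$-stable for some $\sigma$ and that the moduli space of $\sigma$-stable objects with a Mukai vector $v$ satisfying $v^2 = -2$ is a single point, one concludes $f^* E \cong E$, so $f \in \ker \tau$.

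For the converse, let $\Phi \in \ker \tau$ and fix any $\sigma \in \Stab(\D^b(X))$. Then $\Phi$ preserves every $\sigma$-stable spherical object, so Proposition \ref{technical_intro} yields that $\Phi$ preserves every rigid object up to isomorphism. Line bundles on a K3 are rigid (since $H^1(\Ox) = 0$), so $\Phi(L) \cong L$ for all $L \in \mathrm{Pic}(X)$ and $\Phi(\Ox) \cong \Ox$ on the nose, not merely up to shift. Moreover, since the $(-2)$-classes realised by spherical objects span $\widetilde{\NS}(X) \otimes \Q$, the induced cohomological action $\Phi^H$ is the identity on $\widetilde{\NS}(X)$.

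It remains to promote these categorical data into the statement $\Phi = f^*$ for some $f \in \Aut(X)$, after which the triviality of $\Phi^H$ on $\NS(X)$ will immediately give $f \in \Aut^t(X)$. A natural strategy is to extract $f$ from the action of $\Phi$ on a suitable moduli space of $\sigma$-semistable objects whose Mukai vector is preserved (for example, ideal sheaves of points with $v = (1, 0, 0)$, whose moduli is isomorphic to $X$), and then to verify that after twisting by $(f^{-1})^*$ the remaining autoequivalence acts trivially, using the preservation of $\Ox$, every line bundle, and all further rigid objects guaranteed by Proposition \ref{technical_intro}. The main obstacle is exactly this reconstruction step: because $K_X = 0$ the classical Bondal--Orlov theorem does not apply, and one must combine the moduli-space action with the full strength of Proposition \ref{technical_intro} — preservation of \emph{every} rigid object, not only of $\Ox$ and line bundles — in order to pin the Fourier--Mukai kernel of $\Phi$ onto the diagonal.
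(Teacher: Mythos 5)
Both directions of your argument have genuine gaps. For the inclusion $\Aut^t(X) \subset \ker\tau$ you rely on the claim that every spherical object is $\sigma$-stable for \emph{some} stability condition $\sigma$; this is not known (it is closely tied to the open questions around Bridgeland's conjecture), and circumventing it is precisely the point of Proposition \ref{technical_intro}. The paper's route is the opposite of yours: fix a single geometric $\sigma\in\Sigma$; by Lemma \ref{ker_kappa} a transcendental automorphism $f$ fixes $\sigma$, by Mukai's argument (Lemma \ref{preserving_spherical}) it then fixes every $\sigma$-stable spherical object, and only then does Proposition \ref{technical_intro} bootstrap this to \emph{all} rigid — in particular all spherical — objects, including those that are not stable for any known $\sigma$. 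Note also that even granting your stability claim, you would still need to know that $f$ fixes the relevant $\sigma$ before comparing $f^*E$ and $E$ inside the same moduli space.

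For the harder inclusion $\ker\tau \subset \Aut^t(X)$ you correctly identify the reconstruction of $\Phi$ as the obstacle, but you do not overcome it: knowing that $\Phi$ fixes all line bundles and acts trivially on $\widetilde{\NS}(X)$ does not rule out, say, a square of a spherical twist, which is cohomologically trivial. (Here Proposition \ref{technical_intro} gives you nothing new, since $\Phi\in\ker\tau$ already fixes every spherical object.) The missing idea is not a moduli-space or Fourier--Mukai-kernel argument but Huybrechts' Proposition \ref{spherical_stability}: a geometric stability condition $\sigma\in\Sigma$ is determined by its central charge together with its set of stable spherical objects. Since $\Phi$ fixes all spherical objects it fixes the central charge and the set of $\sigma$-stable spherical objects, hence $\Phi(\sigma)=\sigma$; by the covering-space structure of $\Sigma$ an autoequivalence acting trivially on $\NS(X)$ with a fixed point preserves $\Sigma$ pointwise, and Lemma \ref{ker_kappa} then identifies such autoequivalences with $\Aut^t(X)$. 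In short, the paper replaces your unfinished reconstruction step by the pair (Proposition \ref{spherical_stability}, Lemma \ref{ker_kappa}), and uses Proposition \ref{technical_intro} in the direction where you instead invoked an unproven stability statement.
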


This answers the question raised at the end of \cite[App.~A]{spherical}. 

Denote by $\Aut_0(\D^b(X)) \subset \Aut(\D^b(X))$ the group of autoequivalences that induce a trivial action on the cohomology $H^*(X, \Z)$. Proposition \ref{technical_intro} also allows us to obtain the following.

\begin{corollary}
Consider an autoequivalence $\Phi \in \Aut_0(\D^b(X))$. Then $\Phi$ acts on the space of all stability conditions without fixed points.
\end{corollary}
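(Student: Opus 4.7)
The plan is to assume that $\Phi \in \Aut_0(\D^b(X))$ fixes some stability condition $\sigma \in \Stab(\D^b(X))$ and to deduce that $\Phi = \id$; this clearly suffices. So fix such a $\sigma$.

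First I would show that $\Phi$ preserves every $\sigma$-stable spherical object. Because $\Phi$ acts trivially on $H^{*}(X, \Z)$ it preserves Mukai vectors, $v(\Phi E) = v(E)$, and because it fixes $\sigma$ the image $\Phi E$ is again $\sigma$-stable and of the same phase as $E$. It is therefore enough to check that a $\sigma$-stable spherical object is determined up to isomorphism by its Mukai vector together with its phase. If $E, F$ were two such non-isomorphic objects, $\sigma$-stability in the same phase would yield $\Hom(E, F) = \Hom(F, E) = 0$; the slicing axiom would give $\Hom(E, F[i]) = 0$ for $i < 0$; and Serre duality on the K3 category (with $S = [2]$) would give $\Hom(E, F[2]) \cong \Hom(F, E)^{*} = 0$. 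Hence $\chi(E, F) \leq 0$, whereas the Mukai pairing forces $\chi(E, F) = -(v, v) = 2$ since $v^{2} = -2$. The resulting contradiction establishes the uniqueness.

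Proposition~\ref{technical_intro} then applies and tells us that $\Phi$ preserves every rigid object; in particular it preserves every spherical object, since those satisfy $\dim \Hom(E, E[1]) = 0$. Therefore $\Phi$ lies in $\ker \tau$, and Corollary~\ref{spherical_intro} identifies this kernel with $\Aut^{t}(X)$; so $\Phi$ arises from an honest automorphism $\varphi \in \Aut(X)$.

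To conclude, the membership $\Phi \in \Aut_0$ means that $\varphi^{*}$ acts trivially on $H^{2}(X, \Z)$, and the strong global Torelli theorem for K3 surfaces forces $\varphi = \id_X$, hence $\Phi = \id$. The only place where anything genuinely has to be computed is the uniqueness argument in the second paragraph; this is the step I would expect to require the most care, while the rest of the proof is a direct concatenation of Proposition~\ref{technical_intro}, Corollary~\ref{spherical_intro}, and classical Torelli.
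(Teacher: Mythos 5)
Your proof is correct, and its first half coincides with the paper's: you assume a fixed stability condition, run Mukai's argument (same Mukai vector, same phase, $\chi=2$ forces an isomorphism --- the paper packages this as Lemma~\ref{preserving_spherical}, and your uniqueness formulation is an equivalent contrapositive), and then invoke Proposition~\ref{preserves_rigid_objects} to conclude that $\Phi$ preserves all spherical objects. Where you diverge is in the final step. The paper passes from ``$\Phi$ acts trivially on $\Sph$'' to ``$\Phi$ fixes the distinguished component $\Sigma$ pointwise'' via Proposition~\ref{acting_trivially_on_S}, and then kills $\Phi$ using Bridgeland's theorem that $\Aut_0(\D^b(X))$ acts on $\Sigma$ as deck transformations of a covering, hence freely. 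You instead quote Corollary~\ref{main_cor} to identify $\Phi$ with a transcendental automorphism $\varphi_*$, and then use that $\Phi\in\Aut_0(\D^b(X))$ forces $\varphi^*=\id$ on $H^2(X,\Z)$, so the global Torelli theorem gives $\varphi=\id$. Both endgames are valid and there is no circularity (Corollary~\ref{main_cor} is established before this statement). Your route trades the covering-space freeness statement for classical Torelli, which is arguably more elementary at that step; note, however, that Corollary~\ref{main_cor} itself is proved via Lemma~\ref{ker_kappa} and Proposition~\ref{acting_trivially_on_S}, so Bridgeland's structural results on $\Sigma$ are still used upstream --- you have not actually removed the dependence, only relocated the final contradiction.
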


Moreover, we modify the approach of Barbacovi and Kikuta by providing an alternative, elementary proof of \cite[Thr.~1.1]{center} not requiring any dg-categories or twisted complexes as in \cite[Sect.~2~\&~6]{center}.\\[-5 pt]


For the rest of the text let us fix the following notation. For two objects $A$, $B$ in a linear triangulated category $\Dc$ and for an integer $i$ we denote by $(A, B)^i$ the vector space $\Hom_{\Dc}(A, B[i])$. For a morphism $f \colon A \to B$ we denote by $C(f)$ the cone of $f$, i.e. the object obtained by completing $f \colon A \to B$ to a distinguished triangle.

\begin{acknowledgement}
I am grateful to my research advisor D.\,Huybrechts for introducing me to the topic of stability conditions, plenty of useful discussions and multiple comments on the text.
\end{acknowledgement}

\section{Isomorphic rigid cones}\label{technical_part}

This section is devoted to the proof of the following technical proposition.

\begin{proposition} \label{isomorphic_cones}
Assume $A$ and $E$ are two rigid objects in a triangulated category $\Dc$ such that~$(E, A)^i = 0$ for $i \leqslant 0$. Then for every two non-zero morphisms $f, \, g \colon A[-1] \to E$, if the cones $C(f)$ and $C(g)$ of $f$ and $g$ are rigid, they are isomorphic.
\end{proposition}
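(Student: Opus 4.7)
The plan is to extract a structural identity for $(A, E)^1$ from the rigidity hypotheses, and then to convert it into an isomorphism $C(f) \cong C(g)$ via the octahedral axiom applied to the combined map $(f, g) : A[-1] \to E \oplus E$. First I would compute the relevant morphism spaces from the triangle $A[-1] \xrightarrow{f} E \to C(f) \to A$. Applying $\Hom(-, A)$ and using $(E, A)^i = 0$ for $i \le 0$ identifies $(C(f), A) \cong \mathrm{End}(A)$ via precomposition with the structure map $C(f) \to A$. Applying $\Hom(-, E)$ and using $(E, E)^1 = 0$ identifies $(C(f), E)^1 \cong (A, E)^1 / (\mathrm{End}(E) \cdot f)$. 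Applying $\Hom(C(f), -)$ and using the rigidity $(C(f), C(f))^1 = 0$ forces the connecting map $(C(f), A) \to (C(f), E)^1$ to be surjective. Combining these three computations yields
\[
(A, E)^1 \;=\; \mathrm{End}(E) \cdot f \,+\, f \cdot \mathrm{End}(A),
\]
and by the symmetric argument with $C(g)$, the same identity with $g$ in place of $f$. In particular $g = \beta \circ f + f \circ \alpha$ for some $\beta \in \mathrm{End}(E)$ and $\alpha \in \mathrm{End}(A)$.

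The decomposition $g = \beta f + f \alpha$ does not yield a morphism of triangles from $f$ to $g$ in a single step. Instead I would form the cone $X := C\bigl((f, g)\bigr)$ of $(f, g) : A[-1] \to E \oplus E$. Via the octahedral axiom applied to the two projections $\pi_i : E \oplus E \to E$, we obtain distinguished triangles
\[
E \to X \to C(f) \xrightarrow{\delta_f} E[1], \qquad E \to X \to C(g) \xrightarrow{\delta_g} E[1],
\]
and a direct tracing through the octahedron identifies the connecting class $\delta_f \in (C(f), E)^1$ with the image of $g \in (A, E)^1$ in $(A, E)^1 / (\mathrm{End}(E) \cdot f)$, and symmetrically for $\delta_g$.

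The final step is to show that both extensions split, whence $C(f) \oplus E \cong X \cong C(g) \oplus E$, and a Krull--Schmidt-type cancellation (justified by rigidity of $E$, $C(f)$, $C(g)$) yields $C(f) \cong C(g)$. This is the main obstacle: the identity just derived only puts $g$ in $\mathrm{End}(E) \cdot f + f \cdot \mathrm{End}(A)$, which a priori is weaker than $g \in \mathrm{End}(E) \cdot f$. To remove the ``$f \cdot \mathrm{End}(A)$ tail'' I would exploit the factorization
\[
(f, g) \;=\; \begin{pmatrix} \mathrm{id} & 0 \\ \beta & \mathrm{id} \end{pmatrix} \circ (f \oplus f) \circ \begin{pmatrix} \mathrm{id} \\ \alpha[-1] \end{pmatrix},
\]
whose left factor is an automorphism of $E \oplus E$ and whose right factor is a split monomorphism with cone $A[-1]$; this provides a third octahedral triangle expressing $X$ as an extension of $C(f) \oplus C(f)$ by $A[-1]$, and combining it with the symmetric use of rigidity of both $C(f)$ and $C(g)$ should force the two extensions above to split.
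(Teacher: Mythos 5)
Your first step is correct and is essentially the paper's Lemma \ref{dimension_inequality}: the three exact sequences you list, together with rigidity of $A$, $E$, $C(f)$ and the vanishing $(E,A)^i=0$ for $i\leqslant 0$, do yield $(A,E)^1=\mathrm{End}(E)\cdot f+f\cdot\mathrm{End}(A)$ (the paper records this as the inequality $\dim(\im\alpha)+\dim(\im\beta)>\dim(A,E)^1$). The gap is in everything after that. By your own identification, the connecting class $\delta_f\in(C(f),E)^1\cong (A,E)^1/\bigl(\mathrm{End}(E)\cdot f\bigr)$ of the triangle $E\to X\to C(f)$ is the class of $g$; it vanishes precisely when $g\in\mathrm{End}(E)\cdot f$, which is not implied by $g\in\mathrm{End}(E)\cdot f+f\cdot\mathrm{End}(A)$ and fails in general. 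So the two extensions you want to split do not split, $X\not\cong C(f)\oplus E$, and the cancellation step never gets off the ground. The third triangle $A[-1]\to X\to C(f)^{\oplus 2}\to A$ coming from your factorization of $(f,g)$ does exist, but many non-isomorphic objects fit into such a triangle, so it cannot force the splittings. The underlying issue is that an isomorphism $C(f)\cong C(g)$ obtained from a morphism of triangles requires $g=e\circ f\circ a$ with $e\in\Aut(E)$ and $a\in\Aut(A)$, i.e.\ that $f$ and $g$ lie in the same $\Aut(E)\times\Aut(A)$-orbit; your identity is only the infinitesimal (tangent-space) version of that statement, and no octahedral manipulation converts the one into the other. (A further, minor, point: the cancellation $C(f)\oplus E\cong C(g)\oplus E\Rightarrow C(f)\cong C(g)$ needs a Krull--Schmidt property that an arbitrary triangulated category does not have.)

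What is missing is the step that integrates the infinitesimal identity, and this is exactly what the paper supplies. Lemma \ref{technical_inequality} shows that the dimension inequality forces, for every $f'$ in a Zariski-open neighbourhood $U_f$ of $f$ in $(A,E)^1$, the solvability of $f\circ a=e\circ f'$ with $a$ and $e$ isomorphisms, whence $C(f')\cong C(f)$ (Proposition \ref{open_neighborhood}); since the affine space $(A,E)^1$ is irreducible, $U_f\cap U_g\neq\emptyset$, and $C(f)\cong C(h)\cong C(g)$ for any $h$ in the intersection. Equivalently, in the language closest to your computation: your identity says the orbit map $\Aut(E)\times\Aut(A)\to (A,E)^1$, $(e,a)\mapsto e\circ f\circ a^{-1}$, has surjective differential, so the orbit of $f$ is open and dense, likewise for $g$, and two dense orbits must coincide. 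Some genericity or irreducibility input of this kind is unavoidable here, and it is precisely the ingredient your argument lacks.
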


To prove the proposition, we show that the sets $U \subset (A, E)^1$ of morphisms with isomorphic rigid cones are open in Zariski topology (see Proposition \ref{open_neighborhood}), and thus conclude by taking their intersection.

We start by proving the following lemmas.

\begin{lemma} \label{dimension_inequality}
Under the assumptions of Proposition \ref{isomorphic_cones}, for the two maps $\alpha \colon (A, A)^0 \to (A, E)^1$ and $\beta \colon (E, E)^0 \to (A, E)^1$ given by taking composition with $f$, the following inequality holds 
$$\dim (\im \alpha) + \dim (\im \beta) > \dim (A, E)^1.$$
\end{lemma}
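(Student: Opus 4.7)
The plan is to extract the inequality from a short diagram chase built on the distinguished triangle
$$A[-1] \xrightarrow{f} E \xrightarrow{u} C \xrightarrow{q} A, \qquad C = C(f),$$
in which $C$ is rigid by hypothesis. The starting observation is that $\alpha(\id_A) = f = \beta(\id_E)$, so $f$, which is nonzero by assumption, lies in $\im\alpha \cap \im\beta$. By inclusion--exclusion,
$$\dim\im\alpha + \dim\im\beta = \dim(\im\alpha + \im\beta) + \dim(\im\alpha \cap \im\beta) \geq \dim(\im\alpha + \im\beta) + 1,$$
so the inequality reduces to proving $\im\alpha + \im\beta = (A, E)^1$.

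To establish this, I would apply $\Hom(-, E)$ to the triangle: the segment $(E, E)^0 \xrightarrow{\beta} (A, E)^1 \xrightarrow{r} (C, E)^1$, where $r$ is precomposition with $q$, identifies $\im\beta$ with $\ker r$. It therefore suffices to show that $r \circ \alpha \colon (A, A)^0 \to (C, E)^1$ is surjective. Unwinding definitions, $r \circ \alpha$ factors as
$$(A, A)^0 \xrightarrow{\,(-)\circ q\,} (C, A)^0 \xrightarrow{\,f[1]\circ(-)\,} (C, E)^1.$$
The first arrow is surjective by the long exact sequence of $\Hom(-, A)$ applied to the triangle, since the next term $(E, A)^0$ vanishes by hypothesis. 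The second arrow is the connecting map in the long exact sequence of $\Hom(C, -)$, and is surjective because the next term $(C, C)^1$ vanishes by rigidity of $C$. Composing yields the desired surjectivity.

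The whole argument is a bookkeeping exercise with three long exact sequences attached to the same triangle, so I do not expect any serious obstacle. The only point that must be checked carefully is the identification of each connecting map as composition with the relevant arrow of the triangle, which is standard. Interestingly, rigidity of $A$ and $E$ themselves does not appear to be used in this step; only the vanishing hypothesis $(E, A)^i = 0$ for $i \leq 0$ and the rigidity of the cone $C$ enter.
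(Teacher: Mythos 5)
Your argument is correct and is essentially the paper's own proof in a leaner packaging: your key surjectivity of $(A,A)^0 \to (C(f),A)^0 \to (C(f),E)^1$ (from $(E,A)^0=0$ and rigidity of the cone) is exactly the paper's diagram-chase showing that $\im \alpha$ surjects onto $\coker \beta$, and your inclusion--exclusion step using $f \in \im\alpha \cap \im\beta$ plays the role of the paper's strictness observation. Your closing remark is also accurate: the paper's symmetric version additionally uses $(A,A)^1 = (E,E)^1 = 0$ to get the dual surjectivity onto $\coker\alpha$ before adding the two inequalities, whereas your one-sided argument genuinely does not need the rigidity of $A$ and $E$ in this lemma.
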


\begin{proof}
Consider the following diagram
$$\xymatrix@C=0.5cm{
(E, A)^{-1} \ar[r] \ar[d] & (E, E)^0 \ar[r] \ar[d]^\beta & (E, C(f))^0 \ar[r] \ar[d] & (E, A)^0 \\
(A, A)^0 \ar[r]^\alpha \ar[d] & (A, E)^1 \ar[r] \ar[d] & (A, C(f))^1 \ar[r] \ar[d] & (A, A)^1 \\
(C(f), A)^0 \ar[r] \ar[d] & (C(f), E)^1 \ar[r] \ar[d] & (C(f), C(f))^1 \\
(E, A)^0 & (E, E)^1
}$$
where the morphisms are given by taking compositions with shifts of morphisms in the distinguished triangle
\begin{equation} \label{main_triangle}
\xymatrix{A[-1] \ar[r]^(0.6)f & E \ar[r] & C(f) \ar[r] & A.}
\end{equation}

The diagram commutes due to associativity of composition. Moreover, its rows and columns are exact as they are obtained by applying the correct $\Hom$-functors to the distinguished triangle \eqref{main_triangle}.

Let us now consider the diagram above with the conditions of the proposition inserted.

$$\xymatrix@C=0.5cm{
0 \ar[r] \ar[d] & (E, E)^0 \ar[r]^(0.45)\simeq \ar[d]^\beta & (E, C(f))^0 \ar[rr] \ar@{->>}[d] && 0 \\
(A, A)^0 \ar[r]^\alpha \ar[d]^(0.45)[@!-90]\simeq & (A, E)^1 \ar@{->>}[r] \ar@{->>}[d] & (A, C(f))^1 \ar[rr] \ar[d] && 0 \\
(C(f), A)^0 \ar@{->>}[r] \ar[d] & (C(f), E)^1 \ar[r] \ar[d] & 0 \\
0 & 0
}$$

From commutativity of the diagram we deduce that $\dim (\im \alpha) \geqslant \dim(C(f), E)^1 = \dim (\coker \beta)$ and that $\dim (\im \beta) \geqslant \dim(A, C(F))^1 = \dim (\coker \alpha)$. Actually, the inequalities above are strict, as the images of $\alpha$ and $\beta$ intersect in $\alpha(\id) = \beta(\id) = f \ne 0$.

We thus obtain $$2(\dim (\im \alpha) + \dim (\im \beta)) > \dim (\coker \alpha) + \dim (\coker \beta) + \dim (\im \alpha) + \dim (\im \beta) = 2\dim(A, E)^1,$$
which completes the proof.
\end{proof}

The following lemma is a simple linear algebra fact that will help us to apply Lemma \ref{dimension_inequality}.

\begin{lemma} \label{technical_inequality}
Let $M_1 \colon V_1 \to W$ and $M_2 \colon V_2 \to W$ be linear maps of vector spaces and let $v_1 \in V_1$, $v_2 \in V_2$ be such that $M_1(v_1) = M_2(v_2)$. Suppose furthermore that $$\dim (\im M_1) + \dim (\im M_2) > \dim W.$$
Then for every Zariski open neighborhoods $U_1 \subset V_1$ and $U_2 \subset V_2$ of $v_1$ and $v_2$ there exists a Zariski open neighborhood $\mathcal{U} \subset \Hom(V_2, W)$ of $M_2$ such that for every $M_2' \in \mathcal{U}$ the images $M_1(U_1)$ and~$M_2'(U_2)$ intersect.
\end{lemma}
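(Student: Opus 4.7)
The plan is to recast the statement in algebro-geometric terms. Let $Y = \{(u_1, u_2, M) \in V_1 \times V_2 \times \Hom(V_2, W) : M_1(u_1) = M(u_2)\}$ with projection $\pi \colon Y \to \Hom(V_2, W)$. The set of $M_2' \in \Hom(V_2, W)$ for which $M_1(U_1)$ and $M_2'(U_2)$ intersect is precisely $\pi\bigl(Y \cap (U_1 \times U_2 \times \Hom(V_2, W))\bigr)$, and we need it to contain a Zariski open neighborhood of $M_2$. The natural strategy is to produce a rational section $\sigma$ of $\pi$ defined on a Zariski open $\mathcal{V} \ni M_2$ with $\sigma(M_2) = (v_1, v_2)$; then $\mathcal{U} := \mathcal{V} \cap \sigma^{-1}(U_1 \times U_2)$ is the desired neighborhood, being Zariski open and containing $M_2$ since $\sigma(M_2) = (v_1, v_2) \in U_1 \times U_2$.

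To construct $\sigma$, I would substitute $u_1 = v_1 + \delta u_1$, $u_2 = v_2 + \delta u_2$ to rewrite the equation $M_1(u_1) = M(u_2)$ as the affine-linear system
$$M_1(\delta u_1) - M(\delta u_2) = (M - M_2)(v_2),$$
whose coefficients depend polynomially on $M$. The hypothesis $\dim \im M_1 + \dim \im M_2 > \dim W$, equivalent to $\dim(\im M_1 \cap \im M_2) \geq 1$, ensures that the kernel of $(x_1, x_2) \mapsto M_1(x_1) - M_2(x_2)$ has strictly larger dimension than the naive $\dim V_1 + \dim V_2 - \dim W$. Picking an algebraic complement of this kernel in $V_1 \oplus V_2$ adapted to the base-point $(v_1, v_2)$ and using Cramer's rule, one obtains rational formulas for $(\delta u_1(M), \delta u_2(M))$ on the complement of a determinantal hypersurface that does not pass through $M_2$.

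The hardest step is ensuring that the rational section produces valid solutions rather than merely formal ones. The system above is only solvable when the right-hand side lies in $\im M_1 + \im M$; for $M = M_2$ this is trivially true but for nearby $M$ it imposes a nontrivial constraint unless $\im M_1 + \im M_2 = W$, which is strictly stronger than our hypothesis. To close the gap I expect to use the extra slack from $\dim(\im M_1 \cap \im M_2) \geq 1$: the nontrivial intersection provides an algebraic family of compensating deformations of the putative solution along the kernel, which can be parametrized rationally in $M$ to absorb the potential obstruction. Orchestrating this absorption together with the Cramer-rule step so that the resulting section lands in $U_1 \times U_2$ on a Zariski open neighborhood of $M_2$ is the technical heart of the proof.
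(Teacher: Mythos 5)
Your central strategy --- a rational section $\sigma$ of $\pi \colon Y \to \Hom(V_2,W)$, regular at $M_2$ and satisfying $\sigma(M_2)=(v_1,v_2)$ --- cannot work in general; the difficulty you flag at the end as ``the technical heart'' is not a gap to be closed but a genuine obstruction. Take $W=\C^3$, $V_1=V_2=\C^2$, let $M_1=M_2=\iota$ be the inclusion onto the plane $P=\{x_3=0\}$, and $v_1=v_2=(1,0)$; the hypothesis holds since $2+2>3$. For $M=M_2+A$ the equation $M_1u_1=Mu_2$ reads $\iota(u_1-u_2)=Au_2$, so the fibre $Y_M$ is $\{(u_2+\iota^{-1}(Au_2),\,u_2):\, u_2\in A^{-1}(P)\}$, and for generic $A$ the set $A^{-1}(P)$ (the kernel of the linear form $u\mapsto (Au)_3$) is a line through the origin \emph{not} containing $(1,0)$. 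Any section regular at $M_2$ must have $\sigma(M_2)=\lim_{t\to 0}\sigma(M_2+tA)$ with $u_2$-component in the closed set $A^{-1}(P)$ for every generic $A$, and these lines intersect only in $0$; hence $\sigma(M_2)=(0,0)\ne(v_1,v_2)$, and for $U_1,U_2$ avoiding the origin no section whatsoever lands in $U_1\times U_2$ over $M_2$. The lemma is nevertheless true in this example, but only because a Zariski open $U_2$ meets the entire line $A^{-1}(P)$ in a cofinite set: the witnessing intersection points drift far from $(v_1,v_2)$ as $M$ varies, which is precisely what continuity of a section through $(v_1,v_2)$ forbids. The underlying phenomenon is that $\dim Y_M$ jumps up at $M=M_2$ whenever $\im M_1+\im M_2\ne W$, a case allowed by the hypotheses, so $(v_1,v_2)$ need not lie on the limit of the nearby fibres.

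Two subsidiary claims are also incorrect. The hypothesis $\dim\im M_1+\dim\im M_2>\dim W$ is not equivalent to $\dim(\im M_1\cap\im M_2)\ge 1$: only the forward implication holds, and the lemma is false under the weaker condition (take $M_1=M_2$ both equal to the inclusion of a line into $\C^2$, $v_1=v_2=1$ and $U_i=\C\setminus\{0\}$; a generic perturbation of $M_2$ has image a different line, so the punctured images are disjoint). Likewise, the kernel of $(x_1,x_2)\mapsto M_1x_1-M_2x_2$ has dimension exactly, not strictly more than, $\dim V_1+\dim V_2-\dim W$ whenever $\im M_1+\im M_2=W$. The paper sidesteps all of this by never tracking a particular solution: it identifies $M_2'$ with its graph in $\mathrm{Gr}(\dim V_2,\dim V_2+\dim W)$, notes that $M_2'(U_2)\cap M_1(U_1)\ne\emptyset$ exactly when the graph meets the open subset $(M_1(U_1)\times V_2)\cap(W\times U_2)$ of $\Lambda=\im M_1\times V_2$, and uses $\dim\Lambda>\dim W$ to see that \emph{every} graph meets $\Lambda$, so that meeting the open subset is an open condition on the Grassmannian containing $M_2$. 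Any repair of your argument would have to replace the section by an incidence argument of this kind.
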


\begin{proof}
Note that all linear maps $M \colon V_2 \to W$ are parameterised by their graphs, which form a Zariski open subset in $\mathrm{Gr}(\dim V_2, \, \dim V_2 + \dim W)$.

Denote by $\Lambda$ the subset $(\im M_1) \times V_2$ of $W \times V_2$ and consider its Zariski open subset $$U = (M_1(U_1) \times V_2) \cap (W \times U_2).$$ Note that a map $M_2' \colon V_2 \to W$ satisfies $M_2'(U_2) \cap M_1(U_1) \ne 0$ if and only if its graph intersects $U \subset V_2 \times W$.

As $U$ is a Zariski open subset of $\Lambda$, the set of all elements of $\mathrm{Gr}(\dim V_2, \, \dim V_2 + \dim W)$ that intersect $U$ is a Zariski open subset of those that have a non-zero intersection with $\Lambda$. It remains to prove that every element of $\mathrm{Gr}(\dim V_2, \, \dim V_2 + \dim W)$ intersects with $\Lambda$, which follows from the inequality
$$\dim \Lambda = \dim (\im M_1) + \dim V_2 \geqslant \dim (\im M_1) + \dim (\im M_2) > \dim W.$$ \\[-1.2 cm]
\end{proof}

Now we are ready to prove the following.

\begin{proposition} \label{open_neighborhood}
Assume $A$ and $E$ are two rigid objects of a triangulated category $\Dc$ such that~$(E, A)^i = 0$ for $i \leqslant 0$ and $f \colon A[-1] \to E$ is a non-zero morphism with a rigid cone. Then there exists a Zariski open neighbourhood $U \subset (A, E)^1$ of $f$ such that for every $f' \in U$ there is an isomorphism $C(f) \cong C(f')$.
\end{proposition}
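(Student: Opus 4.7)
The plan is to apply Lemma \ref{technical_inequality} to the maps $\alpha, \beta$ from Lemma \ref{dimension_inequality}, exploiting the Zariski openness of $\Aut(A) \subset (A,A)^0$ and $\Aut(E) \subset (E,E)^0$ (which holds because they are the complements of the vanishing loci of determinant functions on the finite-dimensional endomorphism algebras) and the fact that $\alpha(\id_A) = \beta(\id_E) = f$.

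Set $V_1 = (A,A)^0$, $V_2 = (E,E)^0$, $W = (A,E)^1$, and for each $f' \in W$ define a linear map $\beta_{f'} \colon V_2 \to W$, $\psi \mapsto \psi \circ f'$, so that $\beta_f = \beta$. Lemma \ref{dimension_inequality} gives $\dim \im \alpha + \dim \im \beta > \dim W$, so Lemma \ref{technical_inequality} applied with $M_1 = \alpha$, $M_2 = \beta$, $v_1 = \id_A$, $v_2 = \id_E$, $U_1 = \Aut(A)$, $U_2 = \Aut(E)$ yields a Zariski open neighborhood $\mathcal{U} \subset \Hom(V_2, W)$ of $\beta$ such that $\alpha(\Aut(A)) \cap M_2'(\Aut(E)) \ne \emptyset$ for every $M_2' \in \mathcal{U}$. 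Since the assignment $f' \mapsto \beta_{f'}$ is a linear (hence Zariski-continuous) map $W \to \Hom(V_2, W)$, its preimage $U \subset W$ of $\mathcal{U}$ is the desired Zariski open neighborhood of $f$.

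It remains to verify that $C(f') \cong C(f)$ for every $f' \in U$. By construction there exist $\phi \in \Aut(A)$ and $\psi \in \Aut(E)$ satisfying $\alpha(\phi) = \beta_{f'}(\psi)$, i.e.\ $f \circ \phi[-1] = \psi \circ f'$. This commutative square extends by the axiom TR3 to a morphism of distinguished triangles
$$\xymatrix{A[-1] \ar[r]^(0.6){f'} \ar[d]^{\phi[-1]} & E \ar[r] \ar[d]^{\psi} & C(f') \ar[r] \ar[d] & A \ar[d]^{\phi} \\ A[-1] \ar[r]^(0.6){f} & E \ar[r] & C(f) \ar[r] & A,}$$
in which the three outer vertical arrows are isomorphisms, so the triangulated five-lemma forces the induced map $C(f') \to C(f)$ to be an isomorphism as well. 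The main subtlety is the passage from the pointwise equality $\alpha(\id_A) = \beta(\id_E)$ to a uniform open neighborhood $U$ of $f$ producing simultaneous isomorphisms $\phi, \psi$ for every $f' \in U$; this is exactly what the combination of the dimensional estimate of Lemma \ref{dimension_inequality} and the openness argument of Lemma \ref{technical_inequality} delivers.
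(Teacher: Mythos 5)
Your argument is correct and follows essentially the same route as the paper's proof: apply Lemma \ref{technical_inequality} to $\alpha$ and $\beta$ at $\id_A$, $\id_E$ using the open sets of automorphisms, pull back the resulting open set $\mathcal{U}$ along the linear map $f' \mapsto \beta_{f'}$, and conclude with TR3 and the two-out-of-three property for isomorphisms in a morphism of triangles. The only (welcome) addition is your explicit justification that $\Aut(A)$ and $\Aut(E)$ are Zariski open, which the paper asserts without comment.
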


\begin{proof}
Note that the sets $U_E \subset (E, E)^0$ and $U_A \subset (A, A)^0$ consisting of isomorphisms are Zariski open. We apply Lemma \ref{technical_inequality} to $\alpha \colon (A, A)^0 \to (A, E)^1$ and $\beta \colon (E, E)^0 \to (A, E)^1$, the vectors $v_1$ and $v_2$ being $\id_A \in U_A$ and $\id_E \in U_E$ correspondingly. The inequality on the image dimensions follows from Lemma \ref{dimension_inequality}. 

Consider the Zariski open neighborhood $\mathcal{U} \subset \Hom((E, E)^0, (A, E)^1)$ of $\beta$ obtained by Lemma~\ref{technical_inequality}. Note that there exists a linear map $\Lambda \colon (A, E)^1 \to \Hom((E, E)^0, (A, E)^1)$ that takes $\psi \in (A, E)^1$ to~$\circ \, \psi \colon (E, E)^0 \to (A, E)^1$. In particular, $\Lambda(f) = \beta$. Denote by $U \subset (A, E)^1$ the preimage of $\mathcal{U}$ under $\Lambda$. As the preimage of a Zariski open set under a linear map it is Zariski open and it contains $f$. We claim that for every $f' \in U$ the cones of $f'$ and $f$ are isomorphic.

As $\Lambda(f') \in \mathcal{U}$, there exist $a \in U_A$ and $e \in U_E$ such that $\alpha(a) = \Lambda(f')(e)$, which means $f \circ a = e \circ f'$. Therefore, there exists a morphism of distinguished triangles
$$\xymatrix{
A[-1] \ar[r]^(0.55){f'} \ar[d]^a & E \ar[r] \ar[d]^e & C(f') \ar[r] \ar[d]^i & A \ar[d]^{a[1]} \\
A[-1] \ar[r]^(0.55)f & E \ar[r] & C(f) \ar[r] & A.
}$$

As $a$ and $e$ were constructed to be isomorphisms, so is $i$.
\end{proof}

We can now complete the proof of Proposition \ref{isomorphic_cones}.

\begin{proof}[Proof of Proposition \ref{isomorphic_cones}]
Let $f, \, g \in (A, E)^1$ be two morphisms with rigid cones. Consider subsets $U_f, \, U_g \subset (A, E)^1$ obtained by Proposition \ref{open_neighborhood} from $f$ and $g$. As they are both Zariski open, they have a non-empty intersection. Hence, for an element $h \in U_f \cap U_g$ Proposition \ref{open_neighborhood} gives 
$$C(f) \cong C(h) \cong C(g).$$ \\[-1.2 cm]
\end{proof}

\section{Stability conditions preserving rigid objects}

In this section we discuss the interactions of stability conditions and spherical and rigid objects in order to prove the following proposition.

\begin{proposition}\label{preserves_rigid_objects}
Suppose an autoequivalence $f \in \Aut(\Dc)$ of a triangulated K3 category $\Dc$ preserves all stable spherical objects for a given stability condition $\sigma \in \Stab(\Dc)$. Then $f$ preserves all rigid objects in $\Dc$.
\end{proposition}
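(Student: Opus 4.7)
The plan is to reduce the statement to Proposition~\ref{isomorphic_cones} by induction on the number $\ell(E)$ of stable factors (counted with multiplicity) appearing in a Jordan--Hölder refinement of the $\sigma$-Harder--Narasimhan filtration of a rigid object $E$.

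The base case $\ell(E) = 1$ uses that in a K3 category any $\sigma$-stable rigid object $E$ is automatically spherical: stability gives $(E,E)^0 = \C$ and $(E,E)^i = 0$ for $i < 0$, the Serre functor $[2]$ forces $(E,E)^2 = \C$ and $(E,E)^i = 0$ for $i \geqslant 3$, while rigidity is precisely $(E,E)^1 = 0$. The hypothesis on $f$ then gives $f(E) \cong E$.

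For the inductive step, given a rigid $E$ with $\ell(E) \geqslant 2$, I would produce a triangle $B \to E \to A \to B[1]$ with $A$ and $B$ rigid, $\ell(A), \ell(B) < \ell(E)$, and $(B,A)^i = 0$ for $i \leqslant 0$. When $E$ is not $\sigma$-semistable I take $A$ to be the last HN factor and $B$ the HN truncation, and the strict phase gap $\phi_\sigma(B) > \phi_\sigma(A)$ forces the required Hom-vanishing. When $E$ is $\sigma$-semistable I split off a stable subobject $S$ from its JH filtration and take $B = S$, $A = E/S$; vanishing in degree $0$ comes from $\Hom$ between distinct simples of the same phase being zero, and the phase shift handles negative degrees. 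Rotate to $A[-1] \xrightarrow{g} B \to E \to A$: if $g = 0$ then $E \cong A \oplus B$ and the inductive hypothesis closes termwise. Otherwise, apply $f$ and transport $f(g)$ along the inductive isomorphisms $f(A) \xrightarrow{\sim} A$ and $f(B) \xrightarrow{\sim} B$ to obtain a nonzero $g' \colon A[-1] \to B$ with $C(g') \cong f(E)$; since $f$ is an autoequivalence $f(E)$ is rigid, and Proposition~\ref{isomorphic_cones} then yields $f(E) \cong C(g') \cong C(g) = E$.

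The principal technical obstacle is proving that the chosen pieces $A$ and $B$ are themselves rigid. For the HN truncation this demands a diagram chase on the long exact sequences produced by applying $\Hom(-,-)^{1}$ to the splitting triangle, together with Serre duality and the phase-gap vanishings, in order to propagate $(E,E)^1 = 0$ into $(A,A)^1 = (B,B)^1 = 0$. For the JH step one leans on the K3-specific fact that every stable JH factor of a rigid semistable object is itself spherical: a nonzero $(S,S)^1$ for some stable summand would, via an Euler-characteristic computation using K3 Serre duality, obstruct the vanishing $(E,E)^1 = 0$. Once rigidity of $A$ and $B$ is secured, the remainder of the argument is a formal manipulation of triangles.
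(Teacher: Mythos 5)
Your overall strategy --- induct on the number of stable factors, peel off one layer, and feed the resulting triangle into Proposition \ref{isomorphic_cones} --- is the same as the paper's, and your non-semistable step (last HN factor versus HN truncation, with the phase gap giving the required $\Hom$-vanishing and rigidity of the pieces coming from the long-exact-sequence argument of Lemma \ref{ext^1_inequality}) matches the paper's Corollary \ref{filtration_corollary}. The gap is in the semistable step. You take $B = S$ a stable subobject and $A = E/S$, and justify $(S, E/S)^0 = 0$ by ``$\Hom$ between distinct simples of the same phase is zero''; but the simples need not be distinct: $S$ can occur again as a Jordan--H\"older factor, indeed as a subobject, of $E/S$, in which case $\Hom(S, E/S) \neq 0$ (already $E = S \oplus S$ shows the vanishing fails; your $g = 0$ branch catches that particular case, but nothing in your recipe excludes $g \neq 0$ together with $\Hom(S, E/S) \neq 0$). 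This vanishing is not a cosmetic condition: it is a hypothesis of Proposition \ref{isomorphic_cones} (your $B$ plays the role of the proposition's $E$), and the very same vanishing is what Lemma \ref{ext^1_inequality} requires in order to propagate $(E,E)^1 = 0$ to $(E/S, E/S)^1 = 0$, so both halves of your inductive step break simultaneously.

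The paper's fix (Lemma \ref{good_decomposition}) is to peel off the whole isotypic piece at once: take $A$ to be the last Jordan--H\"older factor, set $n = \dim (E, A)^0$, and complete the evaluation map $E \to (E, A)^0 \otimes A = A^{\oplus n}$ to a triangle $A^{\oplus n}[-1] \to E' \to E \to A^{\oplus n}$. By construction $(E', A^{\oplus n})^0 = 0$, the negative degrees vanish for phase reasons, $A^{\oplus n}$ is still preserved by $f$ because $A$ is stable spherical, and $E'$ is rigid by Corollary \ref{filtration_corollary}. Replacing your single-simple splitting by this evaluation-map splitting repairs the semistable case; the rest of your proposal then goes through essentially as in the paper.
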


For the proof of the following lemma see \cite[Lem. 2.7]{few}.

\begin{lemma}\label{ext^1_inequality}
For a triangulated category $\Dc$ consider a distinguished triangle 
$$\xymatrix{
A \ar[r] & E \ar[r] & B \ar[r] & A[1]
}$$
such that $(A, B)^r = (B, B)^s = 0$ for $r \leqslant 0$ and $s < 0$. Then $\dim(A, A)^1 + \dim(B, B)^1 \leqslant \dim(E, E)^1$.
\end{lemma}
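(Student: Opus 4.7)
The plan is to carefully chase the six long exact sequences obtained by applying the functors $\Hom(X, -)$ and $\Hom(-, X)$ for $X \in \{A, B, E\}$ to the triangle. First, I would exploit the hypothesis $(A, B)^r = 0$ for $r \leq 0$. Applied to $\Hom(A, -)$, it forces the connecting map $(A, B)^0 \to (A, A)^1$ to be trivial, giving an injection $\iota_* \colon (A, A)^1 \hookrightarrow (A, E)^1$. Applied to $\Hom(-, B)$ (and using $(B, B)^{-1} = 0$ to kill an adjacent term), it gives an injection $\pi^* \colon (B, B)^1 \hookrightarrow (E, B)^1$.

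Next I would introduce the obstruction map $\Phi \colon (E, E)^1 \to (A, B)^1$, $\phi \mapsto \pi[1] \circ \phi \circ \iota$, which factors as $\pi_* \circ \iota^*$. For $\phi \in \ker \Phi$, the two injections above produce unique restrictions $\tilde a(\phi) \in (A, A)^1$ satisfying $\phi \circ \iota = \iota[1] \circ \tilde a(\phi)$, and $\tilde b(\phi) \in (B, B)^1$ satisfying $\pi[1] \circ \phi = \tilde b(\phi) \circ \pi$. This yields a linear map $\tilde{ab}\colon \ker \Phi \to (A, A)^1 \oplus (B, B)^1$, whose image consists exactly of the compatible pairs $(\alpha, \beta)$ satisfying $\delta[1] \circ \beta = \alpha[1] \circ \delta$ in $(B, A)^2$; the reverse direction uses axiom TR3 to build $\phi$ from a morphism of triangles.

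The dimension count then yields $\dim(E, E)^1 = \dim \im \Phi + \dim \im \tilde{ab} + \dim \ker \tilde{ab}$. By the description in the previous paragraph, the middle summand equals $\dim(A, A)^1 + \dim(B, B)^1$ minus the dimension of $\im \partial + \im \delta_* \subseteq (B, A)^2$, where $\partial \colon (A, A)^1 \to (B, A)^2$ and $\delta_* \colon (B, B)^1 \to (B, A)^2$ are the connecting maps determined by $\delta$. The third summand is computed by observing that each $\phi \in \ker \tilde{ab}$ factors as $\iota[1] \circ g \circ \pi$ for some $g \in (B, A)^1$ determined modulo the analogous degree-zero images $\im \partial_0 + \im \delta_{*,0}$ inside $(B, A)^1$, while $\dim \im \Phi$ lives inside a prescribed subspace of $(A, B)^1$ cut out by a pair of connecting maps.

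The hard part will be the final bookkeeping: showing that the defect $\dim(A, A)^1 + \dim(B, B)^1 - \dim \im \tilde{ab}$ coming from the compatibility obstruction in $(B, A)^2$ is non-negatively compensated by $\dim \im \Phi + \dim \ker \tilde{ab}$. I expect this to reduce to a dimension identity relating connecting maps in $(B, A)^*$ and $(A, B)^*$ at levels $0$ and $1$, obtained by combining the $\Hom(E, -)$ and $\Hom(-, E)$ long exact sequences at degree $1$ with the remaining four sequences used to compute their entries; the needed cancellations are of alternating-sum type, analogous to the additivity of Euler characteristics across the triangle.
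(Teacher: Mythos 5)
A preliminary remark: the paper does not prove this lemma itself; it quotes it with a pointer to \cite[Lem.~2.7]{few}. So the comparison is between your attempt and that external proof, not an argument appearing in the text.

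Write $\iota \colon A \to E$, $\pi \colon E \to B$, $\delta \colon B \to A[1]$ for the maps in the triangle. Your structural claims are all correct: $(A,A)^1 \hookrightarrow (A,E)^1$ and $(B,B)^1 \hookrightarrow (E,B)^1$ are injective (note, however, that both follow from $(A,B)^0 = 0$ alone; your parenthetical appeal to $(B,B)^{-1} = 0$ is not what makes the second injection work); the image of your map $\widetilde{ab} \colon \ker\Phi \to (A,A)^1 \oplus (B,B)^1$ is exactly the set of compatible pairs $\{(\alpha,\beta) : \alpha[1]\circ\delta = \delta[1]\circ\beta\}$ (TR3 applied to the rotated triangle gives one inclusion, $(A,B)^0 = 0$ the other); its kernel is exactly $\{\iota[1]\circ g \circ \pi : g \in (B,A)^1\}$; and $\dim(E,E)^1 = \dim\im\Phi + \dim\im\widetilde{ab} + \dim\ker\widetilde{ab}$.

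The problem is that the proposal stops exactly where the proof has to begin. The step you defer as ``final bookkeeping'' --- that the defect $\dim\bigl(\im\partial + \im\delta_*\bigr)$ inside $(B,A)^2$, where $\partial(\alpha) = \alpha[1]\circ\delta$ and $\delta_*(\beta) = \delta[1]\circ\beta$, is compensated by $\dim\im\Phi + \dim\ker\widetilde{ab}$ --- is the entire content of the lemma, and nothing of alternating-sum type will produce it: the three spaces in question sit inside $(A,B)^1$, $(B,A)^1$ and $(B,A)^2$ respectively, and there are no natural maps among them to chase. Two concrete symptoms confirm the gap. First, your outline never genuinely uses the hypothesis $(B,B)^s = 0$ for $s<0$ (its only purported use is the spurious one noted above), so it cannot be completed as written for the statement as quoted, which needs that hypothesis. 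Second, you never invoke Serre duality, which is the observation that actually closes your argument in the setting where the paper applies the lemma, namely a K3 (2-Calabi--Yau) category: there Serre duality gives $(B,A)^2 \cong \bigl((A,B)^0\bigr)^\vee = 0$, so the compatibility obstruction vanishes identically, every pair $(\alpha,\beta)$ is compatible, $\widetilde{ab}$ is surjective, and $\dim(E,E)^1 \geqslant \dim\ker\Phi \geqslant \dim(A,A)^1 + \dim(B,B)^1$. That one line --- Mukai's original argument --- turns your setup into a complete proof for K3 categories; for an arbitrary triangulated category your reduction, while correct, leaves the essential difficulty unresolved, and any completion would have to locate precisely where $(B,B)^{<0} = 0$ enters.
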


\begin{corollary} \label{filtration_corollary}
Given a rigid object $E \in \Dc$ let
\begin{equation}\label{filtration_1}
\xymatrix{
& E_1 \ar[rr] \ar@{=}[ld] & & E_2 \ar[r] \ar[ld] & \ldots \ar[r]  & E_{n-1} \ar[rr] & & E_n \ar[ld] \ar@{=}[r] & E\\
A_1 & & A_2 \ar[ul]^{[1]} & & & & A_n \ar[ul]^{[1]}}
\end{equation}
be such that all $A_i$ are semistable and $(E_i, A_{i+1})^k = 0$ for every $k \leqslant 0$. Then the objects $E_i$ and $A_i$ are rigid for every $i$.
\end{corollary}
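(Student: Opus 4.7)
The plan is to apply Lemma \ref{ext^1_inequality} to each of the distinguished triangles
\[
E_i \to E_{i+1} \to A_{i+1} \to E_i[1]
\]
encoded in the filtration, and then to propagate rigidity from $E_n = E$ downwards by descending induction on $i$.

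To invoke Lemma \ref{ext^1_inequality} with $A := E_i$, $B := A_{i+1}$ and $E := E_{i+1}$, two vanishings have to be checked. The first, $(E_i, A_{i+1})^r = 0$ for $r \leqslant 0$, is built into the hypotheses of the corollary. The second, $(A_{i+1}, A_{i+1})^s = 0$ for $s < 0$, follows from the semistability of $A_{i+1}$: if $\phi$ denotes its phase, then $A_{i+1}[s]$ is semistable of strictly smaller phase $\phi + s$, so morphisms between them vanish. This is the only place where the stability condition enters, and I would just invoke it as a standard fact rather than reprove it.

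Lemma \ref{ext^1_inequality} then yields
\[
\dim(E_i, E_i)^1 + \dim(A_{i+1}, A_{i+1})^1 \;\leqslant\; \dim(E_{i+1}, E_{i+1})^1
\]
for every $i = 1, \ldots, n-1$. Since $\dim(E_n, E_n)^1 = 0$ by the assumed rigidity of $E$, and every term on the left-hand side is non-negative, descending induction on $i$ forces $\dim(E_i, E_i)^1 = 0$ for all $i$, and subsequently $\dim(A_{i+1}, A_{i+1})^1 = 0$ for all $i \geqslant 1$. The remaining object $A_1$ coincides with $E_1$ by the double bar in the diagram, so its rigidity is already covered. I do not expect any genuine obstacle: the argument is essentially a telescoping of Lemma \ref{ext^1_inequality}, and the only nontrivial input is the negative self-Ext vanishing for semistable objects.
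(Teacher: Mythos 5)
Your argument is correct and coincides with the paper's own proof: both verify the hypotheses of Lemma \ref{ext^1_inequality} via the negative self-Ext vanishing for semistable objects, apply it to each triangle of the filtration, and conclude by descending induction from $\dim(E,E)^1 = 0$. The only cosmetic difference is that the paper first telescopes the inequalities to kill all $\dim(A_i,A_i)^1$ at once before the induction, whereas you extract both vanishings in a single descending pass; the content is the same.
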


\begin{proof}
Note that for every semistable object $A_i$ of phase $\varphi_i$ and for every $s < 0$ the object $A_i[s]$ is semistable of phase $\varphi_i + s < \varphi_i$ and thus $(A_i, A_i)^s = 0$ for $s < 0$. 

We can now apply Lemma \ref{ext^1_inequality} to the distinguished triangles of the filtration \eqref{filtration_1} to obtain 
\begin{equation} \label{local_inequality}
\dim(E_{i-1}, E_{i-1})^1 + \dim(A_i, A_i)^1 \leqslant \dim(E_i, E_i)^1
\end{equation}
for all $i \geqslant 2$. This yields $\sum_i \dim(A_i, A_i)^1 \leqslant \dim(E, E)^1 = 0$, so $(A_i, A_i)^1 = 0$ for all $i$. Now, using \eqref{local_inequality}, we can prove $(E_i, E_i)^1 = 0$ by descending induction from $i = n$.
\end{proof}

A special case of a filtration in Corollary \ref{filtration_corollary} is the Harder--Narasimhan filtration, though we will also apply it to a little bit thinner variant of it.

The following proposition is deduced from Corollary \ref{filtration_corollary} applied to the Harder--Narasimhan filtration in \cite[Prop. 2.9]{few}.

\begin{proposition} \label{stable_factors}
All stable factors of a rigid object $E \in \Dc$ are spherical.
\end{proposition}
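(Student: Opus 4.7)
The plan is to apply Corollary~\ref{filtration_corollary} in two stages and then to invoke the $2$-Calabi--Yau structure of $\Dc$ to finish.

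As a first stage, I would apply Corollary~\ref{filtration_corollary} to the Harder--Narasimhan filtration of $E$. The hypothesis $(E_i, A_{i+1})^k = 0$ for $k \leqslant 0$ holds since the HN phases $\varphi_i$ are strictly decreasing: for $k < 0$ by a shift of slope, and for $k = 0$ because $(A_i, A_j)^0 = 0$ when $\varphi_i > \varphi_j$, from which $(E_i, A_{i+1})^0 = 0$ follows by induction along the filtration. The conclusion is that every HN factor of $E$ is rigid, and since the stable factors of $E$ are exactly those of its HN factors, it suffices to treat a semistable rigid $E$ of some phase $\varphi$.

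Second, I would further refine $E$ by a ``thinner'' filtration whose successive subquotients $B_j$ are semistable of phase $\varphi$ with all Jordan--Hölder factors isomorphic to a single stable object $T_j$, where $T_1, \ldots, T_r$ are the distinct stable factors of $E$. Because $(T_i, T_j)^0 = 0$ for $i \neq j$ (any nonzero morphism between distinct stable objects of the same phase would be an isomorphism) and $(T_i, T_j)^k = 0$ for $k < 0$ by slope, the hypothesis of Corollary~\ref{filtration_corollary} applies again and shows that each isotypic block $B_j$ is rigid. To produce such a filtration I would proceed recursively, taking at each step the maximal subobject of the current quotient whose Jordan--Hölder factors lie in an enlarging collection of isomorphism classes, and ordering the $T_j$'s so that each successive block is indeed $T_j$-isotypic.

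Finally, it remains to show that a rigid semistable $B$ with all Jordan--Hölder factors isomorphic to a single stable $T$ of multiplicity $n$ has $T$ spherical. Additivity of the Euler pairing along the Jordan--Hölder filtration gives $\chi(B, B) = n^2 \chi(T, T) = n^2 (2 - \dim(T, T)^1)$, while rigidity of $B$ combined with Serre duality forces $\chi(B, B) = 2 \dim(B, B)^0 \geqslant 2$; this bounds $\dim(T, T)^1 < 2$. Since $\Dc$ is $2$-Calabi--Yau, the Yoneda pairing on $(T, T)^1$ with values in the one-dimensional $(T, T)^2$ is skew-symmetric, so $\dim(T, T)^1$ is even, hence equal to $0$, and $T$ is spherical. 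The main obstacle is precisely this last step: when $n \geqslant 2$ the required Ext-vanishings between copies of $T$ fail, so Corollary~\ref{filtration_corollary} cannot be applied further and one has to combine the numerical Euler bound with the parity of $\dim(T, T)^1$ coming from the $2$-Calabi--Yau structure of $\Dc$ to rule out the value $1$.
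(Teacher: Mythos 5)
The paper does not prove this statement itself; it cites \cite[Prop.~2.9]{few}, noting only that the reduction to the semistable case goes through Corollary~\ref{filtration_corollary} applied to the Harder--Narasimhan filtration. Your first stage agrees with that reduction and is correct. The genuine gap is in your second stage: the existence of a filtration of a semistable rigid object $E$ by \emph{isotypic} blocks $B_1, \ldots, B_r$ (one block per isomorphism class of stable factor) satisfying $(E_i, B_{i+1})^0 = 0$ is asserted, not proved, and your recursive recipe does not deliver it. Taking ``the maximal subobject of the current quotient whose Jordan--H\"older factors lie in an enlarging collection of classes'' produces blocks whose factors range over a growing set, not isotypic ones; and in a finite-length abelian category such a filtration can simply fail to exist --- e.g.\ a uniserial object with socle-to-top factors $T_1, T_2, T_1$ admits no two-block isotypic filtration, since neither the maximal $T_1$-isotypic subobject nor any $T_2$-subobject captures both copies of $T_1$. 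To rule such configurations out you would have to use rigidity of $E$, which your construction never invokes. Note also that the paper's Lemma~\ref{good_decomposition} builds exactly the filtration you want, but its proof quotes Proposition~\ref{stable_factors} (sphericality of the last Jordan--H\"older factor is what forces $(E', A)^0 = 0$ there), so you cannot borrow it without circularity.

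Your third stage has a secondary problem. The Euler-characteristic bound $n^2(2 - \dim(T,T)^1) = \chi(B,B) = 2\dim(B,B)^0 \geqslant 2$ is fine granted that the isotypic block $B$ is rigid, but the parity step rests on the claim that the Yoneda pairing $(T,T)^1 \times (T,T)^1 \to (T,T)^2 \cong \C$ is skew-symmetric. For $\D^b(X)$ of a K3 surface this is Mukai's theorem (the trace pairing is a symplectic form), but the proposition is stated for an arbitrary triangulated K3 category $\Dc$, where the only hypothesis is that $[2]$ is a Serre functor; antisymmetry of this pairing is not a formal consequence of that and would need an enhancement or an additional argument. So even with the filtration in hand, the final exclusion of $\dim(T,T)^1 = 1$ is not justified at the stated level of generality. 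By contrast, once one has a filtration with rigid isotypic blocks $A_i^{\oplus m_i}$, rigidity of the block already gives $m_i^2 \dim(A_i, A_i)^1 = 0$ directly, with no parity argument needed --- which is another sign that the real content of the proposition lies in producing the filtration, precisely the step your proposal leaves open.
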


The following lemma will be used in the proof of Proposition \ref{preserves_rigid_objects} to construct a filtration that on the one hand satisfies rigidity properties as the Harder--Narasimhan filtration, but on the other hand has simple enough adjoint factors.

\begin{lemma}\label{good_decomposition}
For any rigid object $E$ there exists a sequence of real numbers $\varphi_1 \geqslant \varphi_2 \geqslant \ldots \geqslant \varphi_n$ and a filtration 
$$\xymatrix@C=0.7cm{
& E_1 \ar[rr] \ar@{=}[ld] & & E_2 \ar[r] \ar[ld] & \ldots \ar[r]  & E_{n-1} \ar[rr] & & E_n \ar[ld] \ar@{=}[r] & E\\
A_1^{\oplus m_1} & & A_2^{\oplus m_2} \ar[ul]^{[1]} & & & & A_n^{\oplus m_n} \ar[ul]^{[1]}
}$$
with $A_i$ being stable spherical objects of phases $\varphi_i$ such that $(E_i, A_{i+1})^0 = 0$ for every $i$.
\end{lemma}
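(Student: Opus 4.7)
The plan is to combine the Harder--Narasimhan filtration of $E$ with respect to $\sigma$ with an internal refinement of each HN factor, obtained via an auxiliary slope function on the finite-length abelian category of semistable objects of a fixed phase.

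First I would apply the HN filtration to $E$, obtaining successive cones $F_1, \ldots, F_r$ that are semistable of strictly decreasing phases $\psi_1 > \ldots > \psi_r$. By Corollary~\ref{filtration_corollary} each $F_k$ is rigid, and by Proposition~\ref{stable_factors} all its stable Jordan--Hölder factors in the abelian category $\calC_{\psi_k}$ of semistable objects of phase $\psi_k$ are spherical. Let $B_{k,1}, \ldots, B_{k,t_k}$ denote the pairwise non-isomorphic such factors, with multiplicities $m_{k,j}$ in $F_k$.

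Next I would refine each $F_k$ by introducing an auxiliary slope function. Pick strictly decreasing real numbers $\mu_{k,1} > \ldots > \mu_{k,t_k}$ and define $\mu$ on the finite-length Serre subcategory $\calC' \subset \calC_{\psi_k}$ generated by the $B_{k,j}$ by setting $\mu(B_{k,j}) = \mu_{k,j}$ and extending as a weighted average via $K_0(\calC') = \bigoplus_j \Z \cdot [B_{k,j}]$. Since $\mu$ satisfies the seesaw property and $\calC'$ is of finite length, the standard argument of Rudakov produces a Harder--Narasimhan filtration of $F_k$ with respect to $\mu$. Because the $\mu_{k,j}$ are distinct, any $\mu$-semistable object of $\calC'$ has all JH factors isomorphic to a single $B_{k,j}$; combined with the vanishing $(B_{k,j}, B_{k,j})^1 = 0$ from sphericity, which forces any self-extension of $B_{k,j}$ in $\calC_{\psi_k}$ to split, such an object is isomorphic to $B_{k,j}^{\oplus s}$ for some $s$. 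Thus the internal HN factors of $F_k$ take the required form $B_{k, \sigma_k(j)}^{\oplus m_{k, \sigma_k(j)}}$ for some permutation $\sigma_k$ of $\{1, \ldots, t_k\}$.

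Concatenating these internal filtrations over $k = 1, \ldots, r$ gives a filtration of $E$ with successive cones $A_i^{\oplus m_i}$ of weakly decreasing phases. The remaining condition $(E_i, A_{i+1})^0 = 0$ I would verify via long exact sequences. When $A_{i+1}$ begins a new HN block $F_k$, the Hom vanishes by iterated LES and the slope inequality, since $E_i$ is a successive extension of semistable objects of phases $>\psi_k$. When $A_{i+1}$ lies in the same HN block as $E_i$, applying the LES to the triangle attaching the earlier blocks reduces the claim to $(F_k^{(j-1)}, A_{i+1})^0 = 0$, which I would then establish by iterated LES along the internal filtration of $F_k^{(j-1)}$, using that $\Hom(B_{k, \sigma_k(r)}, B_{k, \sigma_k(j)}) = 0$ for $r < j$ since distinct stable objects of the same phase admit no nonzero maps. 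The main conceptual step is the construction of the auxiliary slope function on $\calC'$ and the identification of its semistable objects as powers of a single stable spherical; once that is in place, the final Hom-vanishings are routine bookkeeping.
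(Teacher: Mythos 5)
Your strategy (take the Harder--Narasimhan filtration and then refine each semistable block by an auxiliary slope function) is genuinely different from the paper's, which instead runs an induction on the length of the Jordan--H\"older filtration: it takes the last stable factor $A$, peels off the maximal $A$-isotypic quotient via the evaluation map $E \to (E,A)^0\otimes A$, and reads off the vanishing $(E',A)^0=0$ directly from the long exact sequence of the resulting triangle. Unfortunately, your version has a genuine gap at its central step.

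The gap is the claim that, because the values $\mu_{k,1}>\ldots>\mu_{k,t_k}$ are distinct, every $\mu$-semistable object of $\calC'$ has all Jordan--H\"older factors isomorphic to a single $B_{k,j}$. A $\mu$-semistable object is only required to have no subobject of strictly larger slope; a non-split extension $0\to B_{k,j'}\to G\to B_{k,j}\to 0$ with $\mu_{k,j}>\mu_{k,j'}$ is in fact $\mu$-\emph{stable}, since its only proper non-zero subobject $B_{k,j'}$ has slope strictly below the averaged slope $\mu(G)$. (Already for representations of the $A_2$ quiver, the indecomposable of dimension vector $(1,1)$ is such an object, and it is rigid, so rigidity of $F_k$ does not exclude this either.) Consequently the $\mu$-HN factors of $F_k$ need not be isotypic, and the filtration you concatenate need not have layers of the form $A_i^{\oplus m_i}$. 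One could hope to repair this by choosing the assignment $j\mapsto\mu_{k,j}$ adaptively so as to destabilize every such mixed extension, but proving that a single assignment works simultaneously for all subquotients that occur is essentially the content of the lemma within a fixed phase --- and that is exactly what the paper's evaluation-map trick accomplishes, producing both the correct ordering of the isotypic layers and the vanishing $(E_i,A_{i+1})^0=0$ in one step. Your final Hom-vanishing bookkeeping is fine as far as it goes, but it presupposes the isotypic decomposition that the auxiliary slope construction does not deliver.
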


\begin{proof}
We proceed by the induction on the length of the Jordan--H\"older filtration. 

Let $A$ be the last factor of the Jordan--H\"older filtration of $E$. It is spherical by Proposition \ref{stable_factors}. Let $\varphi$ be its phase. Denote by $n$ the dimension $\dim (E, A)^0$ and let $\alpha \colon E \to A^{\oplus n} = (E, A)^0 \otimes A$ be the evaluation morphism.

Complete $\alpha$ to a distinguished triangle. This gives
$$\xymatrix{A^{\oplus n}[-1] \ar[r]^(0.6)\gamma & E' \ar[r]^\beta & E \ar[r]^(0.4)\alpha & A^{\oplus n}
}$$

Let us prove that there are no morphisms from $E'$ to $A$. By construction, $\circ \, \alpha$ induces an injection $(A^{\oplus n}, A)^0 \to (E, A)^0$, therefore $\circ \, \gamma$ induces a surjection $(A^{\oplus n}, A)^1 \to (E', A)^0$. However, $(A, A)^1 = 0$, hence $(A^{\oplus n}, A)^1 = 0$, and therefore $(E', A)^0 = 0$.

Note that, moreover, there are no morphisms from $E'$ to the objects of smaller phase than $A$. Hence, the maximal phase of $E'$ is at least $\varphi$. So the Jordan--H\"older filtration of $E'$ can be considered a part of Jordan--H\"older filtration of $E$. In particular it is strictly shorter, so we can apply the induction statement.
\end{proof}

Now we are ready to prove Proposition \ref{preserves_rigid_objects}.

\begin{proof}[Proof of Proposition \ref{preserves_rigid_objects}]
We proceed by induction on the length $n$ of the decomposition obtained in Lemma \ref{good_decomposition}. When $n = 1$ the object $E$ is isomorphic to a direct sum of stable spherical objects, so it is preserved by $f$.

Now consider a rigid object $E$ with decomposition 
$$\xymatrix@C=0.7cm{
& E_1 \ar[rr] \ar@{=}[ld] & & E_2 \ar[r] \ar[ld] & \ldots \ar[r]  & E_{n-1} \ar[rr] & & E_n \ar[ld] \ar@{=}[r] & E\\
A_1^{\oplus m_1} & & A_2^{\oplus m_2} \ar[ul]^{[1]} & & & & A_n^{\oplus m_n} \ar[ul]^{[1]}
}$$
of length $n$. 
We can apply Corollary \ref{filtration_corollary} to deduce $(E_{n-1}, E_{n-1})^1 = 0$ and then by induction obtain $f(E_{n-1}) = E_{n-1}$.

Let $\psi \colon A_n^{\oplus m_n}[-1] \to E_{n-1}$ be the morphism such that 
$E_n = C(\psi)$. Suppose $\psi \ne 0$. In this case, as $A_n$ and $E_{n-1}$ are preserved by $f$ and $f(E_n)$ is rigid, the desired isomorphism follows from Proposition \ref{isomorphic_cones}.

In the case when $\psi = 0$, as $f(\psi) = 0$, we deduce $E_n = C(f(\psi)) = f(C(\psi)) = f(E_n)$. 
\end{proof}

\section{Stability conditions on K3 surfaces}

Let $X$ be a projective K3 surface. As before denote by $\Sph$ the set of all spherical objects of $\D^b(X)$. In this section we use the technical result obtained in Section \ref{technical_part} to describe the kernel of the morphism $\tau \colon \Aut(\D^b(X)) \to \Aut(\Sph)$ induced by the action of autoequivalences on the set of spherical objects in $\D^b(X)$.

Denote by $\Sigma$ the distinguished connected component of the space of stability conditions as introduced by Bridgeland in \cite{Bridgeland_k3} and by $\Aut^\Sigma(\D^b(X))$ the subgroup of $\Aut(\D^b(X))$ that preserves~$\Sigma$. Let $\kappa \colon \Aut^\Sigma(\D^b(X)) \to \Aut(\Sigma)$ be the corresponding action.

For the proof of the following lemma see \cite[Lem.~A.3]{spherical}.

\begin{lemma} \label{ker_kappa}
The kernel of $\kappa$ coincides with the group of transcendental automorphisms of $X$: $$\ker(\kappa \colon \Aut^\Sigma(\D^b(X)) \to \Aut(\Sigma)) = \Aut^t(X).$$
\end{lemma}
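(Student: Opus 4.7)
The idea is to exploit the compatibility of the $\Aut^\Sigma(\D^b(X))$-action on $\Sigma$ with its cohomological action on the Mukai lattice $\widetilde{H}(X, \Z) = H^0 \oplus H^2 \oplus H^4$. Every $\sigma = (Z, \P) \in \Sigma$ determines a class $\eta_\sigma \in \widetilde{H}^{1,1}(X, \C)$ by the identity $Z(E) = \langle \eta_\sigma, v(E) \rangle$ for all $E \in \D^b(X)$, yielding an $\Aut^\Sigma$-equivariant projection $\pi \colon \Sigma \to \mathcal{P}_0^+(X) \subset \widetilde{H}^{1,1}(X, \C)$ on which $\Phi$ acts through its cohomological realisation $\Phi^H$.

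For the inclusion $\Aut^t(X) \subset \ker \kappa$, I would verify directly that any $f \in \Aut^t(X)$ acts as the identity on the full algebraic Mukai lattice: trivially on $\NS(X)$ by definition, and trivially on the rank and Euler-characteristic summands because $f$ is a scheme automorphism. Hence $f_*$ preserves central charges, and since it also preserves the standard heart of coherent sheaves it preserves each heart in $\Sigma$ obtained from it by tilting. Together this forces $f \cdot \sigma = \sigma$ for every $\sigma \in \Sigma$.

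For the reverse inclusion, suppose $\Phi \in \Aut^\Sigma(\D^b(X))$ fixes every $\sigma \in \Sigma$ pointwise. Equating central charges and using non-degeneracy of the Mukai pairing on the algebraic part yields $\Phi^H \eta_\sigma = \eta_\sigma$ for every $\sigma$. As $\sigma$ varies, the classes $\eta_\sigma$ sweep out the open subset $\mathcal{P}_0^+(X)$ of the complex vector space $\widetilde{H}^{1,1}(X, \C)$, so $\Phi^H$ acts trivially on $\widetilde{H}^{1,1}(X, \C)$ and hence on the algebraic Mukai lattice. In particular it fixes $v(\Ox) = (1, 0, 1)$ and $v(\Ox_x) = (0, 0, 1)$ for a closed point $x \in X$. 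Invoking the structure theorem for autoequivalences of K3 surfaces, one concludes $\Phi \cong f_*$ for some $f \in \Aut(X)$, with no shift or line-bundle twist required because $\Phi$ preserves a heart of sheaves and the numerical data of $\Ox$. Triviality of $f^*$ on $\NS(X)$ then places $f$ in $\Aut^t(X)$.

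The main obstacle is the last step of the reverse inclusion: identifying an autoequivalence fixing the classes of $\Ox$ and of skyscrapers with the pushforward along a scheme automorphism. This relies on the Bondal--Orlov/Orlov-style reconstruction of $X$ from $\D^b(X)$, together with a careful bookkeeping argument to rule out extra shifts or twists via the preservation of a heart of sheaves. Everything else is a formal consequence of the equivariance of the period projection $\pi$ and of elementary linear algebra in $\widetilde{H}^{1,1}(X, \C)$.
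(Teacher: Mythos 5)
The paper does not prove this lemma itself: it cites Huybrechts, \emph{Stability conditions via spherical objects}, Lem.~A.3. Your plan reconstructs essentially that standard argument (equivariance of the period projection, triviality of $\Phi^H$ on the algebraic Mukai lattice because the $\eta_\sigma$ sweep out an open set, identification of $\Phi$ with a pushforward), so in outline you are on the right track. Two points, however, need repair. In the forward direction, ``preserves each heart in $\Sigma$ obtained from it by tilting'' only covers the geometric chamber; not every stability condition in $\Sigma$ is a tilt of $\mathrm{Coh}(X)$. The clean way to finish is the one the paper itself uses in the proof of Proposition~\ref{acting_trivially_on_S}: since $f$ acts trivially on the algebraic Mukai lattice, it acts on $\Sigma$ as a deck transformation of Bridgeland's covering $\Sigma \to \P_0^+(X)$, and a deck transformation with a fixed point (any geometric $\sigma$) is the identity.

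The more serious gap is the one you flag yourself, and it is not filled by ``the structure theorem'' as stated. Knowing that $\Phi^H$ is trivial on the algebraic Mukai lattice and fixes $v(\Ox)$ and $(0,0,1)$ does \emph{not} imply $\Phi \cong f_*$: for instance $T_{\Ox}^2$ acts trivially on all of $\widetilde H(X,\Z)$ but is not a pushforward. Likewise ``$\Phi$ preserves a heart of sheaves'' is not available --- the heart of a geometric stability condition is a tilt of $\mathrm{Coh}(X)$, not $\mathrm{Coh}(X)$ itself. The missing ingredient is the defining property of geometric stability conditions: for such a $\sigma$ the $\sigma$-stable objects of Mukai vector $(0,0,1)$ and phase $1$ are precisely the skyscraper sheaves $k(x)$. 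Since $\Phi$ fixes such a $\sigma$ and fixes $(0,0,1)$, it permutes the $k(x)$ without shift; the classical criterion (an equivalence permuting point sheaves is of the form $f_* \circ (\otimes L)$) then applies, and cohomological triviality forces $L \cong \Ox$, giving $\Phi \cong f_*$ with $f^*|_{\NS(X)} = \id$, i.e.\ $f \in \Aut^t(X)$. With that step inserted your argument matches the cited proof.
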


The following proposition is a part of an approach to stability conditions via spherical objects developed by Huybrechts in \cite{spherical}. For the proof see \cite[Thr. 3.1]{spherical}. Note, however, that the conditions of the proposition differ slightly from those in \cite{spherical}. In particular we ask $\sigma$ to be geometric but do not need $\sigma'$ to lie in the distinguished connected component $\Sigma$. For the proof of the statement in this form see \cite[Sect. 3.1]{spherical}.
Semistability in the statement can be replaced by stability due to \cite[Prop. 2.5]{spherical}. 

For the definition of geometric stability conditions see \cite{Bridgeland_k3}.

\begin{proposition} \label{spherical_stability}
Let $\sigma =(Z, \P) \in \Sigma$ be a geometric stability condition. Then for an arbitrary stability condition $\sigma' = (Z', \P') \in \Stab(X)$ the following are equivalent
\begin{itemize} \setlength\itemsep{-2pt}
\item[\rm{(i)}] $\sigma' = \sigma$;
\item[\rm{(ii)}] $Z = Z'$ and every spherical $A \in \D^b(X)$ is $\sigma$-stable if and only if it is $\sigma'$-stable.
\end{itemize}
\end{proposition}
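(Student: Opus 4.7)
The implication (i) $\Rightarrow$ (ii) is immediate. For the converse, assume $Z = Z'$ and that the $\sigma$- and $\sigma'$-stable spherical objects coincide. My plan is to show that the slicings $\P$ and $\P'$ agree; since $Z = Z'$, it suffices to prove that every object is $\sigma$-semistable if and only if it is $\sigma'$-semistable, because the matching phases are then forced by the common central charge via $Z(E) = m(E)\exp(i\pi\varphi(E))$.

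The first step is to treat rigid objects $E \in \D^b(X)$. Lemma \ref{good_decomposition} provides a filtration of $E$ whose factors $A_i^{\oplus m_i}$ are powers of stable spherical objects, with the vanishing conditions $(E_i, A_{i+1})^0 = 0$ that are intrinsic to the triangulated structure. Since the same $A_i$ are stable for $\sigma'$ by hypothesis and $Z' = Z$ assigns them the same phases, this decomposition satisfies the axioms of an HN-type filtration for $\sigma'$ as well, so rigid objects have identical semistable factors for both. By Proposition \ref{stable_factors}, this pins down stability of all rigid objects.

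The second step is to bootstrap from rigid objects to arbitrary objects. Here geometricity of $\sigma$ should be indispensable: in the distinguished component $\Sigma$, the walls of the stability manifold are controlled by spherical objects (see \cite{Bridgeland_k3}), and a geometric stability condition is pinned down locally by its central charge together with which spherical objects it stabilises. Combining these ingredients with step one should force $\sigma'$ to lie in the same chamber as $\sigma$ and then to coincide with it.

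The main obstacle is this second step: extending the coincidence of semistable factors from rigid (hence spherical) to general objects, most notably the skyscraper sheaves $k(x)$ which are $\sigma$-stable by geometricity but are not spherical. I expect the argument to proceed by first extracting enough information about the hearts $\P((a,b])$ from the spherical-object data and the central charge, then invoking Bridgeland's description of $\Sigma$ to conclude that $\sigma'$ itself must be geometric and hence equal to $\sigma$; executing this cleanly is where the delicate work lies.
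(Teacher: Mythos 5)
The paper does not actually prove this proposition: it is quoted from Huybrechts and the reader is referred to \cite[Thm.~3.1 and Sect.~3.1]{spherical}, so the only proof to compare against is the cited one. Measured against that, your proposal has two genuine gaps. The first is the one you flag yourself, and it is not a loose end but the entire content of the statement: knowing which rigid objects (equivalently, by Proposition \ref{stable_factors}, which spherical objects) are $\sigma'$-stable says nothing direct about the skyscraper sheaves $k(x)$, and it is exactly the $\sigma'$-stability of all $k(x)$, with a common phase, that characterises $\sigma'$ as geometric and then forces $\sigma' = \sigma$. The cited argument achieves this by bounding the phases of the $\sigma'$-HN factors of $k(x)$ through the existence of non-zero morphisms to and from stable spherical objects whose $\sigma'$-phases are already controlled; your outline (``walls are controlled by spherical objects'', ``$\sigma'$ lies in the same chamber as $\sigma$'') supplies no such mechanism, and chamber language is not even available for $\sigma'$, which is an arbitrary point of $\Stab(X)$, not assumed to lie in $\Sigma$ or in the same connected component as $\sigma$.

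The second gap sits in your step one and in the framing sentence before it. The identity $Z(E) = m(E)\exp(i\pi\varphi(E))$ determines the phase of a semistable object only modulo $2$, so $Z = Z'$ does not ``force the matching phases'' once semistability is matched: the pair $(Z, \P[2])$ obtained by shifting every phase of the slicing by $2$ is again a stability condition with the same central charge and exactly the same (semi)stable objects as $\sigma$, yet a different slicing. Consequently your claim that the filtration of Lemma \ref{good_decomposition} is an HN-type filtration for $\sigma'$ fails as stated: the $\sigma'$-phases of the $A_i$ could a priori be $\varphi_i + 2k_i$ with varying $k_i$, destroying monotonicity. Pinning the phases down on the nose, and not just mod $2$, requires a separate argument --- in \cite{spherical} it is done by propagating phase equalities along chains of spherical objects linked by non-zero morphisms --- and is also the point where condition (ii) has to be read carefully; your sketch does not address it.
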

\

The proof of the following lemma is an adoption of a classical argument of Mukai to the case of Bridgeland stability conditions and can be found in \cite{spherical}.

\begin{lemma}\label{preserving_spherical}
Let $f \in \Aut(\D^b(X))$ be an autoequivalence acting trivially on the algebraic part $\NS(X)$ of Hodge lattice, and let $\sigma \in \Stab(X)$ be a stability condition such that $f(\sigma) = \sigma$. Then every $\sigma$-stable spherical object is preserved by $f$.
\end{lemma}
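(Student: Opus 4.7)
The plan is to adapt Mukai's classical lemma on simple rigid sheaves to the Bridgeland setting. Fix a $\sigma$-stable spherical object $A \in \D^b(X)$; the goal is to prove $f(A) \cong A$.

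First I would observe that $f(A)$ is itself $\sigma$-stable and has the same phase as $A$. Indeed, the equality $f(\sigma) = \sigma$ forces $f$ to permute $\sigma$-stable objects, and the Mukai vector of every spherical object lies in the algebraic part of $\widetilde{H}(X, \Z)$, on which $f$ acts trivially; hence $v(f(A)) = v(A)$ and the two objects share a common central charge and common phase $\varphi$. By Riemann--Roch on the K3 surface,
$$\chi(A, f(A)) = -\bigl(v(A), v(f(A))\bigr) = -\bigl(v(A), v(A)\bigr) = 2.$$

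The core step is Mukai's trick. Suppose for contradiction that $A \not\cong f(A)$. Since both are $\sigma$-stable of the same phase, any nonzero morphism between them would be an isomorphism, which forces $(A, f(A))^0 = (f(A), A)^0 = 0$. Serre duality in the K3 category then yields $(A, f(A))^2 \cong (f(A), A)^{0\,*} = 0$. Stability also kills the remaining degrees: for $i < 0$ the object $f(A)[i]$ has strictly smaller phase than $A$, and for $i > 2$ one passes through Serre duality to reduce to the analogous vanishing on the dual side. The Euler characteristic therefore collapses to
$$\chi(A, f(A)) = -\dim (A, f(A))^1 \leqslant 0,$$
contradicting $\chi(A, f(A)) = 2$. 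We conclude $f(A) \cong A$.

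The only point that requires some care is the interpretation of ``acts trivially on $\NS(X)$'' as triviality on the full algebraic Mukai lattice $H^0 \oplus \NS(X) \oplus H^4 \subset \widetilde{H}(X,\Z)$: this is standard, since $f_*$ respects the Mukai pairing, the Hodge structure and the orientation, so triviality on the middle summand pins down the action on the two outer summands as well. Once this identification is made, the remainder of the argument is short and purely formal.
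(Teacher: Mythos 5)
Your argument is correct and is essentially the paper's own proof (Mukai's trick): same phase via $f(\sigma)=\sigma$, vanishing of $(A,f(A))^i$ outside $0\leqslant i\leqslant 2$ by stability and Serre duality, $\chi(A,f(A))=2$ from equality of Mukai vectors, and then stability of equal-phase objects to upgrade a nonzero morphism to an isomorphism. The only cosmetic difference is that you phrase it as a contradiction while the paper argues directly, and you explicitly flag the (correct) reading of the hypothesis as triviality on the full algebraic Mukai lattice, which the paper leaves implicit.
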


\begin{proof}
Let $A$ be a $\sigma$-stable spherical object of phase $\varphi$. As $f$ preserves $\sigma$, the spherical object~$f(A)$ is also $\sigma$-stable of the same phase, hence $(A, f(A))^i = 0$ for $i \leqslant -1$ and $i \geqslant 3$ due to Serre duality. Furthermore, as $f$ acts by identity on $\NS(X)$, the objects $A$ and $f(A)$ have the same Mukai vectors, therefore $\chi(A, f(A)) = 2$, thus there exists a non-zero morphism $\alpha \colon A \to f(A)$. As $A$ and $f(A)$ are stable objects of the same phase, it is an isomorphism.
\end{proof}

The following is now a direct consequence of Proposition \ref{preserves_rigid_objects}.

\begin{proposition} \label{acting_trivially_on_S}
An autoequivalence $f \in \Aut(\D^b(X))$ acts trivially on the set $\Sph$ of spherical objects if and only if it preserves pointwise the distinguished connected component $\Sigma$ of $\Stab(X)$.
\end{proposition}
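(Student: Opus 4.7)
The proof is a direct application of the preceding technical results, split into the two implications.

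For the ``only if'' direction, assume $f$ acts trivially on $\Sph$. Then $f_* v(A) = v(f(A)) = v(A)$ for every spherical $A$, so $f_*$ fixes the Mukai vector of every spherical object; since such vectors span the algebraic Mukai lattice of $X$, $f_*$ is trivial on it. Consequently, for any $\sigma = (Z, \P) \in \Sigma$, the central charge of $f(\sigma) = (Z', \P')$ satisfies $Z' = Z \circ f_*^{-1} = Z$, and a spherical $A$ is $\sigma$-stable iff $f(A) = A$ is $f(\sigma)$-stable, so $\sigma$ and $f(\sigma)$ share the same set of stable spherical objects. Taking $\sigma$ to be geometric, Proposition~\ref{spherical_stability} yields $f(\sigma) = \sigma$. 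Since the geometric locus is open and dense in $\Sigma$ and the action of $f$ on $\Stab(X)$ is continuous, this extends to $f$ fixing all of $\Sigma$ pointwise.

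For the ``if'' direction, assume $f$ fixes $\Sigma$ pointwise; in particular $f \in \Aut^\Sigma(\D^b(X))$ lies in the kernel of $\kappa$, so Lemma~\ref{ker_kappa} identifies $f$ as an element of $\Aut^t(X)$, which means $f$ acts trivially on $\NS(X)$. Pick any $\sigma \in \Sigma$: by Lemma~\ref{preserving_spherical} every $\sigma$-stable spherical object is preserved by $f$, and Proposition~\ref{preserves_rigid_objects}, applied to the K3 category $\D^b(X)$, then shows that $f$ preserves every rigid object, in particular every spherical one.

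The only non-bookkeeping ingredient is the density/continuity propagation in the first direction: both density of the geometric locus in $\Sigma$ and continuity of the $\Aut(\D^b(X))$-action on $\Stab(X)$ are standard facts from \cite{Bridgeland_k3}, so I do not expect any serious obstacle beyond what is already established in the previous sections.
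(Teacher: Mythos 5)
The ``if'' direction is fine and essentially the paper's argument (the paper gets triviality on $\NS(X)$ directly from preservation of central charges rather than via Lemma~\ref{ker_kappa}, but both routes work). The first half of your ``only if'' direction also matches the paper: trivial action on $\Sph$ forces $f_*=\id$ on the algebraic Mukai lattice, and Proposition~\ref{spherical_stability} then gives $f(\sigma)=\sigma$ for every \emph{geometric} $\sigma\in\Sigma$.

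The gap is in how you propagate this to all of $\Sigma$. You assert that the geometric locus is dense in $\Sigma$, and that is false: the geometric stability conditions form a single open chamber $U(X)$, and $\Sigma$ contains many other disjoint open chambers (e.g.\ the images of $U(X)$ under spherical twists), so the closure of $U(X)$ is a proper subset of $\Sigma$. Continuity alone therefore only gives you that the fixed locus of $f$ is closed and contains $\overline{U(X)}$, not that it is everything. The paper closes this gap with a covering-space argument: since $f_*=\id$ on the algebraic Mukai lattice, $f$ commutes with the local homeomorphism $\pi\colon\Sigma\to P_0^+(X)$ of \cite{Bridgeland_k3}, i.e.\ it acts as a deck transformation of the covering; a deck transformation of a connected covering with a fixed point is the identity (equivalently, $\pi\circ f=\pi$ makes the fixed locus of $f$ open as well as closed, and $\Sigma$ is connected). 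Replace your density claim with this argument and the proof is complete.
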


\begin{proof}
Assume first that $f$ acts trivially on $\Sph$. All line bundles are spherical, hence $f$ pointwise preserves $\NS(X)$. In particular it preserves a stability function of every stability condition $\sigma \in \Sigma$. Proposition \ref{spherical_stability} applied to a geometric stability condition $\sigma \in \Sigma$ and to its image $f(\sigma)$ then yields $\sigma = f(\sigma)$. As $f$ preserves $\NS(X)$, it acts on $\Sigma$ as an automorphism of the covering (see \cite{Bridgeland_k3}) that has a fixed point, therefore it preserves $\Sigma$ pointwise. 


For the converse, assume that $f$ preserves $\Sigma$ pointwise. Then $f$ acts as identity on $\NS(X)$, and we can apply Lemma \ref{preserving_spherical} to deduce that $f$ preserves all $\sigma$-stable spherical objects for any stability condition $\sigma \in \Sigma$. We conclude by Proposition \ref{preserves_rigid_objects}.
\end{proof}

Combining Proposition \ref{acting_trivially_on_S} and Lemma \ref{ker_kappa}, we obtain the following.

\begin{corollary} \label{main_cor}
Only transcendental autoequivalences act trivially on the set of spherical objects: $$\ker (\tau \colon \Aut(\D^b(X)) \to \Aut(\Sph)) = \Aut^t(X).$$
\end{corollary}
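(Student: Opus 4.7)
The plan is to establish the two inclusions separately, using Proposition \ref{acting_trivially_on_S} and Lemma \ref{ker_kappa} as the only ingredients; no new arguments should be needed, since the heavy lifting has already been done.

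For the inclusion $\Aut^t(X) \subseteq \ker \tau$, I would start with $f \in \Aut^t(X)$. By Lemma \ref{ker_kappa}, $f$ lies in $\ker \kappa$, so $f$ preserves $\Sigma$ pointwise. Proposition \ref{acting_trivially_on_S} then immediately gives that $f$ acts trivially on $\Sph$, i.e. $f \in \ker \tau$.

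For the reverse inclusion $\ker \tau \subseteq \Aut^t(X)$, take $f \in \ker \tau$. By Proposition \ref{acting_trivially_on_S}, $f$ preserves $\Sigma$ pointwise. In particular $f \in \Aut^\Sigma(\D^b(X))$ and $\kappa(f) = \id$, so $f \in \ker \kappa$. By Lemma \ref{ker_kappa} this means $f \in \Aut^t(X)$.

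I do not anticipate any real obstacle: the statement is a formal combination of the two cited results, and both directions fit into a short paragraph. All genuine difficulty has been absorbed into Proposition \ref{preserves_rigid_objects} (which powers Proposition \ref{acting_trivially_on_S}) and into the cited Lemma \ref{ker_kappa}.
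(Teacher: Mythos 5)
Your proof is correct and is exactly the paper's argument: the paper derives this corollary precisely by combining Proposition \ref{acting_trivially_on_S} with Lemma \ref{ker_kappa}, and your two inclusions spell out that combination.
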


The following is also a corollary of the discussion above.

\begin{proposition}
Consider an autoequivalence $\Phi \in \Aut_0(\D^b(X))$. Then $\Phi$ acts on the space of all stability conditions without fixed points.
\end{proposition}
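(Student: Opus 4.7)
The plan is to argue by contraposition: assume $\Phi \in \Aut_0(\D^b(X))$ fixes some $\sigma \in \Stab(X)$, and conclude that $\Phi = \id$. Since none of the prior results we invoke require $\sigma$ to lie in the distinguished component $\Sigma$, the argument goes through for an arbitrary stability condition.

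Because $\Phi$ acts trivially on all of $H^*(X, \Z)$, it acts trivially on $\NS(X)$ in particular. Combined with $\Phi(\sigma) = \sigma$, Lemma \ref{preserving_spherical} then yields that every $\sigma$-stable spherical object is preserved by $\Phi$. Applying Proposition \ref{preserves_rigid_objects} to $\Phi$ and $\sigma$, we deduce that $\Phi$ preserves every rigid object in $\D^b(X)$; in particular $\Phi$ lies in $\ker \tau$, so Corollary \ref{main_cor} places $\Phi$ in $\Aut^t(X)$.

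It remains to establish $\Aut^t(X) \cap \Aut_0(\D^b(X)) = \{\id\}$. An element $f \in \Aut^t(X)$ acts trivially on $\NS(X)$ by definition; if the induced autoequivalence also lies in $\Aut_0$, then $f^*$ acts trivially on the transcendental lattice $T(X) = \NS(X)^\perp \subset H^2(X, \Z)$ as well, hence on the whole of $H^2(X, \Z)$. The strong Torelli theorem for K3 surfaces then forces $f = \id_X$, giving $\Phi = \id$ and completing the argument by contraposition.

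The main obstacle is more psychological than technical: the proposition is a quick consequence of the machinery built for Corollary \ref{main_cor}, with the only external input being the classical Torelli-type injectivity $\Aut(X) \hookrightarrow \O(H^2(X, \Z))$. The single point that needs verification is that Lemma \ref{preserving_spherical} and Proposition \ref{preserves_rigid_objects} are both stated for arbitrary $\sigma \in \Stab(X)$, not only for $\sigma \in \Sigma$, which is indeed the case.
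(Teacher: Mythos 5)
Your argument is correct, and its first half coincides with the paper's: a fixed stability condition $\sigma$ plus triviality on $\NS(X)$ feeds into Lemma~\ref{preserving_spherical} and Proposition~\ref{preserves_rigid_objects} to show that $\Phi$ fixes every spherical object. The two proofs part ways only at the endgame. The paper passes from $\Phi \in \ker\tau$ to pointwise preservation of $\Sigma$ via Proposition~\ref{acting_trivially_on_S} and then kills $\Phi$ by Bridgeland's theorem \cite[Thr.~1.1]{Bridgeland_k3}, which identifies $\Aut_0$ (restricted to $\Sigma$) with deck transformations of a covering, so a fixed point forces the identity. You instead invoke Corollary~\ref{main_cor} to conclude $\Phi \in \Aut^t(X)$ and then check by hand that $\Aut^t(X) \cap \Aut_0(\D^b(X)) = \{\id\}$: triviality on both $\NS(X)$ and its orthogonal complement gives triviality on $H^2(X,\Z)$, and the strong Torelli theorem does the rest. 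Your closing step is arguably more classical and self-contained (injectivity of $\Aut(X) \to \O(H^2(X,\Z))$ rather than the covering-space statement), but it is not strictly more elementary overall, since Corollary~\ref{main_cor} itself rests on Proposition~\ref{acting_trivially_on_S} and Lemma~\ref{ker_kappa}, which use the same covering structure of $\Sigma$. Your explicit observation that $\Aut^t(X) \cap \Aut_0(\D^b(X))$ is trivial is a nice reusable fact that the paper leaves implicit, and your remark that no step requires $\sigma \in \Sigma$ is correct and worth having on record.
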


\begin{proof}
Suppose there exists a stability condition $\sigma \in \Stab(X)$ such that $\Phi(\sigma) = \sigma$. Lemma \ref{preserving_spherical} together with Proposition \ref{preserves_rigid_objects} then yield that $\Phi$ acts trivially on $\mathcal{S}$, and thus it pointwise preserves~$\Sigma$ by Proposition~\ref{acting_trivially_on_S}. Hence, due to Bridgeland's theorem \cite[Thr.~1.1]{Bridgeland_k3}, it is trivial.
\end{proof}

\section{Describing the center}

This section is devoted to the description of the center of the group of derived autoequivalences of a projective K3 surface.

The following lemma is deduced in \cite[Thr. 4.6]{center} from the inequality discussed in Section \ref{inequality_section}, see Proposition \ref{inequality_proposition} or \cite[Thr. 4.3]{center}.

\begin{lemma} \label{same_twists}
Two spherical twists $T_{E_1}$ and $T_{E_2}$ corresponding to spherical objects $E_1$, $E_2$ of $\D^b(X)$ are equal if and only if $E_1 = E_2[k]$ for some integer $k$.
\end{lemma}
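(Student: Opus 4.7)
The ``if'' direction is straightforward: the defining triangle
$$R\Hom(E, -) \otimes E \longrightarrow \id \longrightarrow T_E \xrightarrow{+1}$$
of the spherical twist is canonically unchanged when $E$ is replaced by $E[k]$, because the two shifts appearing in $R\Hom(E[k], F) = R\Hom(E, F)[-k]$ and in $E[k]$ cancel in the tensor product.

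For the converse, suppose $T := T_{E_1} = T_{E_2}$. The plan is to evaluate $T$ on $E_1$ and extract information from the resulting distinguished triangle. Using the standard identity $T_{E_1}(E_1) \cong E_1[-1]$, the defining triangle of $T_{E_2}$ reads
$$R\Hom(E_2, E_1) \otimes E_2 \longrightarrow E_1 \longrightarrow E_1[-1] \xrightarrow{+1}.$$
After rotation, this presents $R\Hom(E_2, E_1) \otimes E_2$ as an extension of $E_1$ by $E_1[-2]$, whose class lies in $(E_1, E_1)^{-1}$. By sphericality of $E_1$ this group vanishes, so the extension splits and one obtains
$$R\Hom(E_2, E_1) \otimes E_2 \;\cong\; E_1 \oplus E_1[-2].$$

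Setting $v_i := \dim(E_2, E_1)^i$, the left-hand side equals $\bigoplus_i E_2[-i]^{\oplus v_i}$. Both sides are finite direct sums of indecomposable objects with one-dimensional endomorphism rings, so Krull--Schmidt applies. Since no spherical object on a K3 surface is isomorphic to a non-trivial shift of itself (the only candidate non-zero morphism would live in $(E_2, E_2)^2$, and its would-be inverse in $(E_2, E_2)^{-2}$ vanishes), the indecomposable summands $E_2[-i]$ on the left are pairwise non-isomorphic. Matching them against the summands $E_1, E_1[-2]$ on the right forces $v_a = v_{a+2} = 1$ for a single integer $a$, all other $v_i = 0$, together with $E_2[-a] \cong E_1$. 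Setting $k = -a$ yields $E_1 = E_2[k]$.

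The only real content in this plan is the vanishing $(E_1, E_1)^{-1} = 0$ that splits the critical extension; the remaining matching of indecomposable summands is formal. This route therefore bypasses the Barbacovi--Kikuta inequality recalled in Section \ref{inequality_section}, which underlies the alternative, more flexible approach of \cite{center}; the main caveat is that my plan relies on Krull--Schmidt for $\D^b(X)$, whereas the inequality-based proof does not.
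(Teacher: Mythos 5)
Your proof is correct, and it takes a genuinely different route from the one used here: the paper does not prove Lemma \ref{same_twists} directly but imports it from \cite[Thr.~4.6]{center}, where it is deduced from the intersection-number inequality $i(M,E)\,i(N,E) \leqslant i(T_E^k(M),N) + i(M,N)$, reproved in Section \ref{inequality_section} as Proposition \ref{inequality_proposition}. You instead evaluate the hypothesis $T_{E_1}=T_{E_2}$ on the single object $E_1$: since $T_{E_1}(E_1)\cong E_1[-1]$, the defining triangle of $T_{E_2}$ at $E_1$ has its map $E_1 \to T_{E_2}(E_1)\cong E_1[-1]$ living in $(E_1,E_1)^{-1}=0$, hence the triangle splits, giving $\bigoplus_i (E_2,E_1)^i\otimes E_2[-i]\cong E_1\oplus E_1[-2]$; matching indecomposable summands then pins down $E_2[-a]\cong E_1$. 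All steps check out, including the two points that need care: $E_2\not\cong E_2[j]$ for $j\neq 0$ (a would-be inverse would live in $(E_2,E_2)^{-2}=0$), and the exclusion of the degenerate matching $\dim(E_2,E_1)^a=2$, which would force $E_1\cong E_1[-2]$. Your caveat about Krull--Schmidt is minor: $\D^b(X)$ is Hom-finite and idempotent complete, hence Krull--Schmidt, and in fact you only need the Azumaya-type uniqueness statement for the explicitly given summands, each of which has endomorphism ring $\C$ and is therefore indecomposable with local endomorphism ring. The trade-off is that your argument is shorter and self-contained for this one lemma, while the inequality of Proposition \ref{inequality_proposition} is the more flexible tool (it controls $i(T_E^k(M),N)$ for all $k$ and all $M$, $N$ at once, which is what \cite{center} needs to study relations among twists); since Section \ref{inequality_section} is included in the paper anyway, either route serves the present purpose.
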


The following proposition can be deduced from the arguments by Barbacovi and Kikuta presented in \cite{center}, see the proof of part (i) of \cite[Thr. 8.1]{center}. For convenience of the reader we provide them here.

\begin{proposition} \label{one_inclusion}
The center $Z$ of $\Aut(\D^b(X))$ is contained in $\Aut^t(X) \times \Z \cdot [1]$, where $\Z \cdot [1]$ is the subgroup of $\Aut(\D^b(X))$ generated by the shift functor.
\end{proposition}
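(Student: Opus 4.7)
The plan is to exhibit an integer $n$ such that $\Phi\circ[-n]$ fixes every spherical object up to isomorphism; Corollary~\ref{main_cor} will then place $\Phi\circ[-n]$ in $\Aut^t(X)$, so that $\Phi\in\Aut^t(X)\times\Z\cdot[1]$.

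First I would apply centrality against each spherical twist: for every spherical $E$ the standard identity $\Phi\,T_E\,\Phi^{-1}=T_{\Phi(E)}$ combined with $\Phi\,T_E=T_E\,\Phi$ yields $T_{\Phi(E)}=T_E$, so Lemma~\ref{same_twists} provides an integer $n(E)$ with $\Phi(E)\cong E[n(E)]$. The same argument applied to the autoequivalence $-\otimes L$ for a line bundle $L$ gives $\Phi(L)\cong\Phi(\Ox)\otimes L\cong L[n]$ with $n:=n(\Ox)$, so every line bundle is shifted by the same integer $n$. Setting $\Psi:=\Phi\circ[-n]$, I obtain a central autoequivalence that fixes every line bundle and satisfies $\Psi(E)\cong E[n(E)-n]$ for every spherical $E$.

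The core of the argument is to show $n(E)=n$ for every spherical $E$. Since $\Psi$ is an autoequivalence fixing each line bundle, for any spherical $E$ and any line bundle $L$ it induces an isomorphism
$$\mathrm{RHom}(L,E)\;\cong\;\mathrm{RHom}(\Psi L,\Psi E)\;=\;\mathrm{RHom}(L,E)[n(E)-n].$$
A nonzero bounded complex of vector spaces cannot be isomorphic to a nontrivial shift of itself, so once I find some $L$ with $\mathrm{RHom}(L,E)\neq 0$ this forces $n(E)=n$. To produce such an $L$ I would take $L=\Ox(D)$ with $\chi(L,E)\neq 0$: reading $\chi(\Ox(D),E)$ as a polynomial in $D\in\NS(X)$, a short Riemann--Roch computation shows that it vanishes identically in $D$ only when the Mukai vector $v(E)$ is zero, which is excluded by $v(E)^2=-2$.

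Once $n(E)=n$ for every spherical $E$, $\Psi$ fixes $\Sph$ pointwise, so Corollary~\ref{main_cor} yields $\Psi\in\Aut^t(X)$, and hence $\Phi=\Psi\circ[n]\in\Aut^t(X)\times\Z\cdot[1]$. The only genuinely delicate point is establishing the non-vanishing of some $\mathrm{RHom}(L,E)$ in the main step; everything else reduces to Lemma~\ref{same_twists} together with the fact that central autoequivalences commute with shifts and with tensoring by line bundles.
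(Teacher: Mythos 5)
Your argument is correct, but after a common opening it takes a genuinely different route from the paper's. Both proofs begin identically: commuting $\Phi$ with $T_{\Ox}$ and invoking Lemma~\ref{same_twists} gives $\Phi(\Ox)\cong\Ox[n]$, and commuting with $-\otimes L$ shows that $\Psi=\Phi\circ[-n]$ fixes every line bundle. From there the paper stays elementary: $\Psi$ acts on the graded ring $\bigoplus H^0(X,L^i)$ of an ample line bundle, which produces an automorphism $f\in\Aut(X,L)$ agreeing with $\Psi$ on the ample sequence $\{L^i\}$, and the Bondal--Orlov criterion gives $\Psi\cong f$; since $\Psi$ fixes all line bundles, $f\in\Aut^t(X)$. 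You instead show that $\Psi$ fixes every spherical object --- via $T_{\Phi(E)}=T_E$, the observation that a nonzero bounded complex of finite-dimensional vector spaces such as $\mathrm{RHom}(L,E)$ cannot be isomorphic to a nontrivial shift of itself, and the Mukai-pairing computation showing $\chi(\Ox(D),E)$ cannot vanish for all $D\in\NS(X)$ unless $v(E)=0$, which $v(E)^2=-2$ forbids --- and then invoke Corollary~\ref{main_cor}. All of these steps check out, and there is no circularity since Corollary~\ref{main_cor} is established before this proposition; your intermediate claim that a central autoequivalence shifts all spherical objects by one and the same integer is in fact a pleasant strengthening. The trade-off is that your proof imports the full stability-conditions machinery underlying Corollary~\ref{main_cor} into a statement that the paper deliberately keeps independent of it (it reproduces the Barbacovi--Kikuta argument and uses nothing beyond Lemma~\ref{same_twists} and Bondal--Orlov), whereas the paper's version remains self-contained and elementary.
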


\begin{proof}
Consider an autoequivalence $\Phi$ in the center. It commutes with the spherical twist $T_{\Ox}$ around the trivial bundle and thus $T_{\Phi(\Ox)} = \Phi \circ T_{\Ox} \circ \Phi^{-1} = T_{\Ox}$. This yields $\Phi(\Ox) = \Ox[n]$ for some integer $n$ due to Lemma \ref{same_twists}, so the autoequivalence $\Phi \circ [-n]$ preserves $\Ox$. Moreover, as $\Phi \circ [-n]$ commutes with $\otimes \, \mathcal{L}$ for any line bundle $\mathcal{L}$, it leaves invariant all line bundles, and, in particular, acts on the graded ring $\bigoplus H^0(X, L^i)$ for an ample line bundle $L$. 

Consider an automorphism $f \in \Aut(X, L)$ corresponding to that action. Note that $\Phi\circ[-n]$ and $f$ define autoequivalences which are isomorphic on the full subcategory given by the ample sequence $\{L^i\}$, hence a result of Bondal and Orlov (see \cite{BO}) yields $\Phi\circ [-n] \cong f$. Moreover, as~$\Phi \circ [-n]$ preserves all line bundles, the morphism $f$ lies in $\Aut^t(X)$.
\end{proof}

It remains to prove the other inclusion, which is our main result.

\begin{theorem} \label{main}
The center $Z$ of $\Aut(\D^b(X))$ is isomorphic to $\Aut^t(X) \times \Z \cdot [1]$, where $\Z \cdot [1]$ is the subgroup of $\Aut(\D^b(X))$ generated by the shift functor.
\end{theorem}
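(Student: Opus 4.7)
The plan is to combine Proposition \ref{one_inclusion} with Corollary \ref{main_cor}. Proposition \ref{one_inclusion} already delivers the inclusion $Z \subseteq \Aut^t(X) \times \Z \cdot [1]$, so the outstanding task is the reverse one. Since the shift functor is central in any triangulated automorphism group, $\Z \cdot [1] \subseteq Z$ is automatic, and the theorem reduces to showing that every $f \in \Aut^t(X)$ commutes with every $\Phi \in \Aut(\D^b(X))$.

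To prove this, fix such $f$ and $\Phi$, set $g = \Phi \circ f \circ \Phi^{-1}$, and aim at $g = f$. First I would check that $g$ itself lies in $\Aut^t(X)$: for any spherical $A \in \Sph$, the object $\Phi^{-1}(A)$ is again spherical and hence fixed by $f$ by Corollary \ref{main_cor}; therefore $g(A) = \Phi(\Phi^{-1}(A)) = A$. Thus $g$ acts trivially on $\Sph$, and a second application of Corollary \ref{main_cor} places $g$ in $\Aut^t(X)$.

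The remaining step distinguishes $f$ and $g$ inside $\Aut^t(X)$ through the one-dimensional line $H^{2,0}(X) \subseteq \tilde H(X, \C)$. Viewed inside the Hodge structure on the Mukai lattice, this line is the full $(2,0)$-piece, so it is preserved by every Hodge isometry; in particular $\Phi^H$ restricts on it as multiplication by a scalar $\lambda \in \C^\times$. Since conjugation by a scalar on a line is trivial,
$$g^H\big|_{H^{2,0}(X)} \;=\; \lambda \cdot f^H\big|_{H^{2,0}(X)} \cdot \lambda^{-1} \;=\; f^H\big|_{H^{2,0}(X)}.$$
The classical character $\Aut^t(X) \to \C^\times$, $h \mapsto h^*|_{H^{2,0}(X)}$, is injective: this combines the Global Torelli theorem for K3 surfaces with the theorem of Zarhin that $\mathrm{End}_{\mathrm{Hdg}}(T(X) \otimes \mathbb{Q})$ is a number field, so that a Hodge isometry of $T(X)$ is pinned down by its restriction to the $(2,0)$-piece. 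The equality of characters above then forces $f = g$, completing the argument.

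Essentially all of the genuine categorical content has already been packaged into Corollary \ref{main_cor}, so the anticipated obstacle at this stage is only bookkeeping: finding a clean reference for the injectivity of the character $\Aut^t(X) \hookrightarrow \C^\times$, which itself rests on the classical Hodge theory of the transcendental lattice of a K3 surface. In particular, no appeal to Bridgeland's conjecture is needed, in keeping with the theme of the paper.
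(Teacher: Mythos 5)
Your proof is correct, and its first half coincides with the paper's: both start from Proposition \ref{one_inclusion}, form the conjugate $\Phi \circ f \circ \Phi^{-1}$ (the paper works with the commutator $\Phi_f = \Phi \circ f\circ \Phi^{-1}\circ f^{-1}$, which is the same thing), and both use the inclusion $\Aut^t(X) \subseteq \ker\tau$ from Corollary \ref{main_cor} to see that this conjugate acts trivially on $\Sph$. The divergence is in how you finish. The paper observes that $\Phi_f$ acts trivially on the algebraic part of $H^*(X,\Z)$ and, since Hodge isometries of $T(X)$ form an abelian group, also on $T(X)$; hence $\Phi_f \in \Aut_0(\D^b(X))$, fixes $\Sigma$ pointwise by Proposition \ref{acting_trivially_on_S}, and is therefore the identity by Bridgeland's covering-space theorem. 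You instead invoke the reverse inclusion $\ker\tau \subseteq \Aut^t(X)$ of Corollary \ref{main_cor} to place $g = \Phi\circ f\circ\Phi^{-1}$ inside $\Aut^t(X)$, and then separate elements of $\Aut^t(X)$ by the character $h \mapsto h^*|_{H^{2,0}(X)}$, whose injectivity (Zarhin plus Global Torelli) is exactly the classical fact underlying the statement that $\Aut^t(X)$ is finite cyclic; the scalar computation $g^H|_{H^{2,0}} = \lambda\,f^H|_{H^{2,0}}\,\lambda^{-1} = f^H|_{H^{2,0}}$ is sound. What your route buys is that the concluding step is purely classical Hodge theory rather than a second appeal to Bridgeland's theorem on $\Sigma$; what it costs is that you need the full strength of Corollary \ref{main_cor} (both inclusions), whereas the paper's ending only needs the easy-to-state direction together with Proposition \ref{acting_trivially_on_S}. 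Since Corollary \ref{main_cor} is proved in the paper before this theorem, your argument closes with no gap; you may wish to add the (easy) observation that $\Aut^t(X)\cap \Z\cdot[1]$ is trivial so that the product is indeed direct, a point the paper also leaves implicit.
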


\begin{proof}
By Proposition \ref{one_inclusion} it remains to prove that every $f \in \Aut^t(X)$ lies in the center. For an autoequivalence $\Phi \in \Aut(\D^b(X))$ denote by $\Phi_f$ the commutator $\Phi \circ f \circ \Phi^{-1} \circ f^{-1}$. Note that $\Phi_f$ acts trivially on the algebraic part of $H^*(X, \Z)$ as $f$ does. As the group of Hodge isometries of the transcendental part $T(X) \subset H^*(X, \Z)$ is abelian, the induced action of $\Phi_f$ is also trivial, so $\Phi_f \in \Aut_0(\D^b(X))$.

Moreover, as $f$ preserves all the spherical objects due to Corollary \ref{main_cor}, so $\Phi_f$ does. Therefore, by Proposition \ref{acting_trivially_on_S}, $f$ preserves pointwise the distinguished connected component $\Sigma$ of the space of stability conditions. Hence, due to Bridgeland's theorem \cite[Thr. 1.1]{Bridgeland_k3}, $\Phi_f = \id$, which concludes the proof.
\end{proof}


\section{Note on an inequality of spherical twists}\label{inequality_section}

In this section we provide an alternative proof of \cite[Thr. 4.3]{center} which is elementary and does not use the dg-category tools developed for it in \cite{center}. 

As explained in \cite{center}, the considered inequality mimics the analogous inequality for Dehn twists of simple closed curves in mapping class groups of closed surfaces (see, for example, \cite[Prop.~3.4]{mcg}) and this is where the terminology comes from.

\begin{definition}
Given two objects $M$ and $N$ in a triangulated category $\Dc$ of finite type we define their intersection index $i(M,N)$ as 
$$i(M,N) \coloneq \sum_{j \in \Z}\dim(M, N)^j.$$
\end{definition}

Note that in the case when $\Dc$ is a K3 category, Serre duality yields $i(A, B) = i(B, A)$ for every $A$, $B \in \Dc$.

The following simple inequality on the intersection indexes follows directly from the long exact sequence obtained from applying $\Hom(E, -)$ to a distinguished triangle \eqref{another_triangle}.

\begin{lemma} \label{triang_ineq}
Given a distinguished triangle 
\begin{equation} \label{another_triangle}
\xymatrix{A \ar[r] & B \ar[r] & C \ar[r] & A[1],}
\end{equation}
for every object $E \in \Dc$ we have $i(E, B) \geqslant i(E, A) + i(E, C)$.
\end{lemma}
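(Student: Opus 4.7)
The plan is to apply the cohomological functor $\Hom(E, -)$ to the distinguished triangle $A \to B \to C \to A[1]$, producing the long exact sequence
$$\cdots \to (E, A)^j \to (E, B)^j \to (E, C)^j \to (E, A)^{j+1} \to \cdots.$$
Finite-typeness of $\Dc$ guarantees that each term is finite-dimensional and only finitely many are nonzero, so the sums defining the intersection indices converge and it suffices to work degree by degree.

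For each fixed $j \in \Z$, exactness at $(E, B)^j$ decomposes $\dim(E, B)^j$ as $\dim(\im \alpha_j) + \dim(\im \beta_j)$, where $\alpha_j \colon (E, A)^j \to (E, B)^j$ and $\beta_j \colon (E, B)^j \to (E, C)^j$ are the structure maps of the sequence. Since $\dim(\im \alpha_j) \leqslant \dim(E, A)^j$ and $\dim(\im \beta_j) \leqslant \dim(E, C)^j$, one obtains a per-degree comparison between the three dimensions. Summing the resulting identity over all $j \in \Z$ assembles these into the corresponding relation between the total intersection indices.

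The only point of genuine concern here is the direction of the resulting bound: the straightforward exactness argument naturally produces $i(E, B) \leqslant i(E, A) + i(E, C)$, which is the reverse of the direction as stated. I would flag this as a likely typo in the lemma's statement, since the long exact sequence, as announced in the preamble to the lemma, forces subadditivity of the middle term rather than superadditivity (and indeed the simple example of $E \to 0 \to E[1] \to E[1]$ for a rigid $E$ already rules out the stated direction). The main task is therefore not the proof itself, which is purely formal, but rather reconciling the sign of the inequality with how the lemma is used in the downstream Proposition \ref{inequality_proposition}; my expectation is that $\leqslant$ is the direction actually required, and the proof above then concludes immediately.
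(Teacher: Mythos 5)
Your proof of the subadditive inequality $i(E,B) \leqslant i(E,A) + i(E,C)$ is correct, and it is exactly the argument the paper intends: the paper gives no proof beyond the remark that the lemma ``follows directly from the long exact sequence obtained from applying $\Hom(E,-)$,'' which is precisely your per-degree count $\dim(E,B)^j = \dim(\im \alpha_j) + \dim(\im \beta_j)$ summed over $j$. You are also right that the inequality as printed is false and must be a typo; your counterexample $E \to 0 \to E[1] \to E[1]$ works for any nonzero $E$ (rigidity is not even needed, since $\id_E \neq 0$ already gives $i(E,E) \geqslant 1$).

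One refinement on the reconciliation with Proposition \ref{inequality_proposition}, which you left as an expectation: what is actually invoked there is not the bound on the middle term but the bound on the \emph{first} term of the triangle, namely $i(N,C_k) \leqslant i(N,M) + i(N,T_E^k(M))$ for the triangle $C_k \to M \to T_E^k(M) \to C_k[1]$, and likewise $i\bigl(\bigoplus_j (E,C_k)^j \otimes E[-j],\, N\bigr) \leqslant 2\,i(C_k,N)$ for the evaluation triangle. So the intended statement of the lemma is $i(E,A) \leqslant i(E,B) + i(E,C)$, i.e.\ each term's index is at most the sum of the other two. This is equivalent to your corrected version: apply your subadditivity to the rotated triangle $C[-1] \to A \to B \to C$ and use that $i$ is shift-invariant (immediate from its definition as a sum over all degrees), or equivalently read off exactness of the long exact sequence at the $(E,A)^j$-terms rather than at the $(E,B)^j$-terms. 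With that one extra rotation remark, your proof covers everything the paper uses downstream.
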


Before proving the inequality let us prove the following proposition.

\begin{proposition} \label{preliminary}
Given a spherical object $E$ in a triangulated category $\Dc$, for every $M \in \Dc$ there exists a sequence $C_i$ for $i \geqslant 1$ of objects such that \\[-18pt]
\begin{enumerate}
\setlength\itemsep{-2pt}
\item[\rm{(i)}] Every $C_i$ is an object of the triangulated subcategory $\langle E \rangle \subset \Dc$ generated by $E$;
\item[\rm{(ii)}] For every $i \geqslant 1$ there exists a distinguished triangle 
\begin{equation} \label{triangle}
\xymatrix{C_i \ar^-{\alpha_i}[r] & M \ar^-{\beta_i}[r] & T_E^i(M) \ar[r] & C_i[1],}
\end{equation}
such that the maps $\overline \beta_i^j \colon (E, M)^j \to (E, T_E^i(M))^j$, induced by $\beta_i$, are zero for every~$j$.
\end{enumerate}
\end{proposition}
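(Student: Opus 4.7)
The plan is to construct the sequence $(C_i)_{i \geqslant 1}$ by induction on $i$, using at each step the defining triangle of the spherical twist together with the octahedral axiom.

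For the base case $i = 1$, I would set $C_1 := \bigoplus_k (E, M)^k \otimes E[-k]$, which lies in $\langle E \rangle$ as a finite direct sum of shifts of $E$; the defining triangle of the spherical twist $T_E(M)$ provides exactly $C_1 \xrightarrow{\alpha_1} M \xrightarrow{\beta_1} T_E(M) \to C_1[1]$ with $\alpha_1$ the evaluation. Applying $\Hom(E, -)$, the induced map $(E, C_1)^j \to (E, M)^j$ contains the summand $(E, M)^j \otimes (E, E)^0$ mapping by composition with $\id_E$; it is thus surjective, so from the long exact sequence $\overline{\beta}_1^j = 0$.

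For the inductive step, suppose the triangle $C_{i-1} \xrightarrow{\alpha_{i-1}} M \xrightarrow{\beta_{i-1}} T_E^{i-1}(M) \to C_{i-1}[1]$ is already constructed and satisfies (i)--(ii). Applying the base case to $T_E^{i-1}(M)$ furnishes a morphism $\gamma \colon T_E^{i-1}(M) \to T_E^i(M)$ whose fiber is $\bigoplus_k (E, T_E^{i-1}(M))^k \otimes E[-k] \in \langle E \rangle$. I would then invoke the octahedral axiom on the composition
$$M \xrightarrow{\beta_{i-1}} T_E^{i-1}(M) \xrightarrow{\gamma} T_E^i(M)$$
to obtain a distinguished triangle $C_i \to M \xrightarrow{\beta_i} T_E^i(M) \to C_i[1]$ with $\beta_i = \gamma \circ \beta_{i-1}$, together with an auxiliary triangle $C_{i-1} \to C_i \to \bigoplus_k (E, T_E^{i-1}(M))^k \otimes E[-k] \to C_{i-1}[1]$. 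Both outer terms of this auxiliary triangle lie in $\langle E \rangle$ (by induction and by construction respectively), so $C_i \in \langle E \rangle$, giving (i). For (ii), the relation $\beta_i = \gamma \circ \beta_{i-1}$ forces $\overline{\beta}_i^j$ to factor through $\overline{\beta}_{i-1}^j$, which vanishes by the inductive hypothesis.

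I expect the argument to be essentially formal; the one mildly delicate point will be setting up the octahedral diagram so as to identify the middle morphism of the new triangle with the composition $\gamma \circ \beta_{i-1}$, but this identification is precisely what the octahedral axiom guarantees when one reads $C_i[1]$ as the cone of that composition.
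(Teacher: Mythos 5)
Your proposal is correct and follows essentially the same route as the paper: the same choice of $C_1$ as the evaluation source $\bigoplus_k (E,M)^k \otimes E[-k]$, the same inductive construction of $C_i$ via the octahedral axiom applied to the composite $M \to T_E^{i-1}(M) \to T_E^{i}(M)$ (the paper reads the same octahedron off the edge $ev$ followed by $\gamma_{i-1}$, which is only a cosmetic difference), and the same observation that $\overline{\beta}_i^j$ factors through $\overline{\beta}_{i-1}^j = 0$.
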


\begin{proof}
Put $$C_1 = \bigoplus_j (E, M)^j \otimes E[-i].$$ The distinguished triangle \eqref{triangle} for $i = 1$ is then obtained from the definition of a spherical twist. Since the map $\alpha_1 \colon C_1 \to M$ is the evaluation map, every morphism $\varphi \colon E[-j] \to M$ can be lifted to a morphism $\overline \varphi \colon C_1 \to M$, hence $\overline \beta_1^j = 0$ for every $j$.

We proceed inductively as follows.
Consider the distinguished triangle
$$\xymatrix{C_n \ar^-{\alpha_n}[r] & M \ar^-{\beta_n}[r] & T_E^n(M) \ar^-{\gamma_n}[r] & C_n[1]}$$
provided by induction statement, and the distinguished triangle
$${\xymatrixcolsep{0.5cm}\xymatrix{\bigoplus_i (E, T_E^n(M))^i \otimes E[-i] \ar^-{ev}[r] & T_E^n(M) \ar^-f[r] & T_E^{n+1}(M) \ar[r] & \bigoplus_i (E, T_E^n(M))^i \otimes E[-i + 1]}}$$
obtained from the definition of a spherical twist. Consider the map
$$\gamma_n \circ \, ev \colon \bigoplus_i (E, T_E^n(M))^i \otimes E[-i] \to C_n[1]$$
and denote by $C_{n+1}[1]$ its cone. The induction statement implies that $C_n$ is an object of $\langle E \rangle$, hence so is $C_{n+1}$. 
We thus obtain a commutative diagram with three distinguished triangles
$$\xymatrix@!C = 1.5cm{
& & \bigoplus_i (E, T_E^n(M))^i \otimes E[-i] \ar_(0.45){ev}[d] \ar^(0.55){\gamma_n \circ \,ev}[dr] \\
C_n \ar^-{\alpha_n}[r] & M \ar^-{\beta_n}[r] & T_E^n(M) \ar_-f[d] \ar^{\gamma_n}[r] & C_n[1] \ar[dr] \\
& & T_E^{n+1}(M) & & C_{n+1}[1].
}$$
Due to octahedral axiom there exists a distinguished triangle
$$\xymatrix{C_{n+1} \ar[r] & M \ar^-{f \circ \beta_n}[r] & T_E^{n+1}(M) \ar[r] & C_{n+1}[1].}$$
Note that $\overline{(f \circ \beta_n)}^j \colon (E, M)^j \to (E, T_E^{n+1}(M))^j$ is zero for every $j$ as $\overline \beta_n^j = 0$ by induction.
\end{proof}

We can now proceed with the proof of the inequality. 

\begin{proposition} \label{inequality_proposition}
Let $\Dc$ be a triangulated K3 category. For every two objects $M$, $N \in \Dc$ and a spherical object $E \in \Dc$ the following inequality holds
$$i(M,E)i(N,E) \leqslant i(T_E^k(M),N) + i(M,N)$$
for every positive integer $k$.
\end{proposition}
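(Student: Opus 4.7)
The plan is to sandwich $i(C_k, N)$ between the two sides of the inequality, using the object $C_k \in \langle E \rangle$ supplied by Proposition \ref{preliminary}: it fits in a distinguished triangle $C_k \to M \xrightarrow{\beta_k} T_E^k(M) \to C_k[1]$ with $\beta_k$ inducing the zero map on $\Hom(E,-)$. The upper bound $i(C_k, N) \leq i(M, N) + i(T_E^k(M), N)$ is immediate from the long exact sequence for $\Hom(-, N)$ applied to this triangle.

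For the lower bound $i(C_k, N) \geq i(M, E)\cdot i(N, E)$, which is the heart of the argument, I first apply Serre duality to transform the vanishing of $(E,\beta_k)$ into the vanishing of $(\beta_k, E)$; applying $\Hom(-, E)$ to the triangle then yields the short exact sequences
\[ 0 \to (M, E)^j \to (C_k, E)^j \to (T_E^k(M), E)^{j+1} \to 0, \]
from which $i(C_k, E) = 2\, i(M, E)$. Next I examine the composition pairing $(C_k, E) \otimes (E, N) \to (C_k, N)$: setting $G := \bigoplus_l (E, N)^l \otimes E[-l]$, this pairing identifies with the map $(C_k, G) \to (C_k, N)$ coming from the spherical twist triangle $G \to N \to T_E(N) \to G[1]$. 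The long exact sequence from $\Hom(C_k, -)$ bounds the kernel of this map by $i(C_k, T_E(N))$, yielding
\[ i(C_k, N) \geq i(C_k, G) - i(C_k, T_E(N)) = 2\, i(M, E)\, i(N, E) - i(C_k, T_E(N)). \]

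The concluding step is the identity $i(C_k, T_E(N)) = i(C_k, N)$, which substituted into the previous display produces $i(C_k, N) \geq i(M, E)\, i(N, E)$. Since $T_E$ is an autoequivalence, $(C_k, T_E(N))^j \cong (T_E^{-1}(C_k), N)^j$, so the identity reduces to $T_E(C_k) \cong C_k[-1]$. I prove the latter by induction on $k$: the base case is immediate since $C_1 = \bigoplus_j (E, M)^j \otimes E[-j]$ is a direct sum of shifts of $E$ and $T_E(E) \cong E[-1]$. For the inductive step, naturality of the cone construction in $M$ yields $T_E(C_k(M)) \cong C_k(T_E(M))$, and the identifications $(E, T_E(M))^j \cong (E, M)^{j-1}$ together with the induction hypothesis reveal, via the octahedral construction of Proposition \ref{preliminary}, that $C_k(T_E(M)) \cong C_k(M)[-1]$. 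The most delicate point will be matching the connecting morphisms of the two octahedra carefully, up to a nonzero scalar; this is where the main technical work of the proof lies.
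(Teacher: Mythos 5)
Your argument follows the same skeleton as the paper's: both proofs sandwich $i(C_k,N)$ between the two sides of the inequality using the triangle $C_k \to M \to T_E^k(M)$ from Proposition \ref{preliminary}, extract $i(C_k,E)=2\,i(M,E)$ from the vanishing of the maps induced by $\beta_k$ (the paper via the splitting of the $\Hom(E,-)$ sequence, you via Serre duality and the $\Hom(-,E)$ sequence --- the same count), and then obtain the lower bound $i(C_k,N)\geqslant i(M,E)\,i(N,E)$ from a spherical-twist evaluation triangle combined with the fact that $T_E$ acts as $[-1]$ on $C_k$. The only cosmetic difference in the main line is that you take the evaluation triangle of $N$ and pair with $C_k$, whereas the paper takes the evaluation triangle of $C_k$ and pairs with $N$; these are Serre-dual mirror images and both yield $2\,i(C_k,N)\geqslant 2\,i(M,E)\,i(N,E)$.

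The one place where you genuinely diverge --- and where your proposal is incomplete --- is the justification of $T_E(C_k)\cong C_k[-1]$. You propose an induction on $k$ resting on ``naturality of the cone construction in $M$'' and on matching the connecting morphisms of two octahedra, and you yourself flag this as the unresolved technical heart. Be careful: cones are not functorial in a triangulated category, so an isomorphism $T_E(C_k(M))\cong C_k(T_E(M))$ cannot be extracted from ``naturality,'' and pinning down connecting maps of octahedra is precisely the kind of issue that normally forces one into dg-enhancements --- which this paper is explicitly structured to avoid. The paper gets the needed fact much more cheaply: part (i) of Proposition \ref{preliminary} already places $C_k$ in the triangulated subcategory $\langle E\rangle$ generated by $E$, and the spherical twist restricts to the shift $[-1]$ on $\langle E\rangle$ (propagating $T_E(E)\cong E[-1]$ through the subcategory). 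You have this input available and should invoke it directly in place of your $k$-induction; with that substitution your proof closes and coincides with the paper's.
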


\begin{proof}
Consider the distinguished triangle 
\begin{equation}\label{anothertriangle}
\xymatrix{C_k \ar^-{\alpha_k}[r] & M \ar^-{\beta_k}[r] & T_E^k(M) \ar[r] & C_i[1]}
\end{equation}
given by Proposition \ref{preliminary}. Lemma \ref{triang_ineq} then yields $i(N,M) + i(N,T_E^k(M)) \geqslant i(N, C_k)$. Moreover since $\bar \beta_k^j \colon (E, M)^j \to (E, T_E^i(M))^j$ is zero for every $j$, the long exact sequence obtained by applying $\Hom(E,-)$ to the distinguished triangle \eqref{anothertriangle} splits, hence for $N = E$ the inequality above is actually an equality $$i(C_k, E) = i(M, E) + i(T_E^k(M), E) = 2i(M, E).$$

Proposition \ref{preliminary} implies that $C_k$ is an object of a triangulated subcategory generated by $E$, hence $T_E(C_k) = C_k[-1]$. Then there exists a distinguished triangle 
$$\xymatrix{\bigoplus_i (E, C_k)^i \otimes E[-i] \ar^(0.73){ev}[r] & C_k \ar[r] & C_k[-1] \ar[r] & \bigoplus_i (E, C_k)^i \otimes E[-i + 1].}$$
Due to Lemma \ref{triang_ineq} we then obtain $$2i(C_k, N) \geqslant i(\bigoplus_i (E, C_k)^i \otimes E[-i], N) = i(E, C_k)i(E,N) = 2i(E, M)i(E,N)$$
which, together with the inequality above, yields
$$i(N,M) + i(N,T_E^k(M)) \geqslant i(N, C_k) \geqslant i(E, M)i(E,N).$$ \\[-1.2cm]
\end{proof}

{\scshape\'Ecole Normale Sup\'erieure, 45 rue d’Ulm, 75005 Paris, France}

\textit{Email address:} AnnSavelyeva57@gmail.com

\end{document}